\documentclass[12pt,a4paper]{amsart}

\usepackage{t1enc}
\usepackage[latin1]{inputenc}
\usepackage[english]{babel}
\usepackage{hyperref}
\usepackage{graphicx}

\usepackage{latexsym}
\usepackage{mathtools}
\usepackage{amsmath,amsfonts,amssymb,amsthm}

\pagestyle{plain}

\begin{document}

\title{If $L(\chi,1)=0$ then $\zeta(1/2+it)\neq0$.}

\author{Sergio Venturini}
\address{
	S. Venturini:
	Dipartimento Di Matematica,
	Universit\`{a} di Bologna,
	\,\,Piazza di Porta S. Donato 5 ---I-40127 Bologna,
	Italy}
\email{sergio.venturini@unibo.it}

\keywords{
	Riemann zeta function, Dirichlet series, Absolutely/completely monotone functions}
\subjclass[2000]{Primary 11M06, 11M20 Secondary 30B40, 30B50}

\maketitle

\def\R{{\rm I\kern-.185em R}}
\def\RR{\mathbb{R}}
\def\C{{\rm\kern.37em\vrule height1.4ex width.05em depth-.011em\kern-.37em C}}
\def\CC{\mathbb{C}}
\def\N{{\rm I\kern-.185em N}}
\def\NN{\mathbb{N}}
\def\Z{{\bf Z}}
\def\ZZ{\mathbb{Z}}
\def\Q{\mathbb{Q}}
\def\P{{\rm I\kern-.185em P}}
\def\H{{\rm I\kern-.185em H}}
\def\Aleph{\aleph_0}
\def\ALEPH#1{\aleph_{#1}}
\def\sset{\subset}\def\ssset{\sset\sset}
\def\bar#1{\overline{#1}}
\def\dim{\mathop{\rm dim}\nolimits}
\def\half{\textstyle{1\over2}}
\def\Half{\displaystyle{1\over2}}
\def\mlog{\mathop{\half\log}\nolimits}
\def\Mlog{\mathop{\Half\log}\nolimits}
\def\Det{\mathop{\rm Det}\nolimits}
\def\Hol{\mathop{\rm Hol}\nolimits}
\def\Aut{\mathop{\rm Aut}\nolimits}
\def\Re{\mathop{\rm Re}\nolimits}
\def\Im{\mathop{\rm Im}\nolimits}
\def\Ker{\mathop{\rm Ker}\nolimits}
\def\Fix{\mathop{\rm Fix}\nolimits}
\def\Exp{\mathop{\rm Exp}\nolimits}
\def\sp{\mathop{\rm sp}\nolimits}
\def\id{\mathop{\rm id}\nolimits}
\def\Rank{\mathop{\rm rk}\nolimits}
\def\Trace{\mathop{\rm Tr}\nolimits}
\def\Res{\mathop{\rm Res}\limits}
\def\cancel#1#2{\ooalign{$\hfil#1/\hfil$\crcr$#1#2$}}
\def\prevoid{\mathrel{\scriptstyle\bigcirc}}
\def\void{\mathord{\mathpalette\cancel{\mathrel{\scriptstyle\bigcirc}}}}
\def\n{{}|{}\!{}|{}\!{}|{}}
\def\abs#1{\left|#1\right|}
\def\norm#1{\left|\!\left|#1\right|\!\right|}
\def\nnorm#1{\left|\!\left|\!\left|#1\right|\!\right|\!\right|}
\def\Norm#1{\Bigl|\!\Bigl|#1\Bigr|\!\Bigr|}
\def\upperint{\int^{{\displaystyle{}^*}}}
\def\lowerint{\int_{{\displaystyle{}_*}}}
\def\Upperint#1#2{\int_{#1}^{{\displaystyle{}^*}#2}}
\def\Lowerint#1#2{\int_{{\displaystyle{}_*}#1}^{#2}}
\def\rem #1::#2\par{\medbreak\noindent{\bf #1}\ #2\medbreak}
\def\proclaim #1::#2\par{\removelastskip\medskip\goodbreak{\bf#1:}
\ {\sl#2}\medskip\goodbreak}
\def\ass#1{{\rm(\rmnum#1)}}
\def\assertion #1:{\Acapo\llap{$(\rmnum#1)$}$\,$}
\def\Assertion #1:{\Acapo\llap{(#1)$\,$}}
\def\acapo{\hfill\break\noindent}
\def\Acapo{\hfill\break\indent}
\def\prova{\removelastskip\par\medskip\goodbreak\noindent{\it Dimostrazione.\/\ }}
\def\qed{{$\Box$}\par\smallskip}
\def\BeginItalic#1{\removelastskip\par\medskip\goodbreak
\noindent{\it #1.\/\ }}
\def\iff{if, and only if,\ }
\def\sse{se, e solo se,\ }
\def\rmnum#1{\romannumeral#1{}}
\def\Rmnum#1{\uppercase\expandafter{\romannumeral#1}{}}
\def\smallfrac#1/#2{\leavevmode\kern.1em
\raise.5ex\hbox{\the\scriptfont0 #1}\kern-.1em
/\kern-.15em\lower.25ex\hbox{\the\scriptfont0 #2}}
\def\Left#1{\left#1\left.}
\def\Right#1{\right.^{\llap{\sevenrm
\phantom{*}}}_{\llap{\sevenrm\phantom{*}}}\right#1}
\def\newpi{{\pi\mskip -7.8 mu \pi}} 
\def\dimens{3em}
\def\symb[#1]{\noindent\rlap{[#1]}\hbox to \dimens{}\hangindent=\dimens}
\def\references{\bigskip\noindent{\bf References.}\bigskip}
\def\art #1 : #2 ; #3 ; #4 ; #5 ; #6. \par{#1, 
{\sl#2}, #3, {\bf#4}, (#5), #6.\par\smallskip}
\def\book #1 : #2 ; #3 ; #4. \par{#1, {\bf#2}, #3, #4.\par\smallskip}
\def\freeart #1 : #2 ; #3. \par{#1, {\sl#2}, #3.\par\smallskip}
\def\name{\hbox{Sergio Venturini}}
\def\snsaddress{\indent
\vbox{\bigskip\bigskip\bigskip
\name
\hbox{Scuola Normale Superiore}
\hbox{Piazza dei Cavalieri, 7}
\hbox{56126 Pisa (ITALY)}
\hbox{FAX 050/563513}}}
\def\cassinoaddress{\indent
\vbox{\bigskip\bigskip\bigskip
\name
\hbox{Universit\`a di Cassino}
\hbox{via Zamosch 43}
\hbox{03043 Cassino (FR)}
\hbox{ITALY}}}
\def\bolognaaddress{\indent
\vbox{\bigskip\bigskip\bigskip
\name
\hbox{Dipartimento di Matematica}
\hbox{Universit\`a di Bologna}
\hbox{Piazza di Porta S. Donato 5}
\hbox{40127 Bologna (BO)}
\hbox{ITALY}
\hbox{sergio.venturini@unibo.it}
}}
\def\homeaddress{\indent
\vbox{\bigskip\bigskip\bigskip
\name
\hbox{via Garibaldi, 7}
\hbox{56124 Pisa (ITALY)}}}
\def\doubleaddress{
\vbox{
\hbox{\name}
\hbox{Universit\`a di Cassino}
\hbox{via Zamosch 43}
\hbox{03043 Cassino (FR)}
\hbox{ITALY}
\smallskip
\hbox{and}
\smallskip
\hbox{Scuola Normale Superiore}
\hbox{Piazza dei Cavalieri, 7}
\hbox{56126 Pisa (ITALY)}
\hbox{FAX 050/563513}}}
\def\sergio{{\rm\bigskip
\centerline{Sergio Venturini}
\leftline{\bolognaaddress}
\bigskip}}
\def\a{\alpha}
\def\bg{\beta}
\def\g{\gamma}
\def\G{\Gamma}
\def\dg{\delta}
\def\D{\Delta}
\def\e{\varepsilon}
\def\eps{\epsilon}
\def\z{\zeta}
\def\th{\theta}
\def\T{\Theta}
\def\k{\kappa}
\def\lg{\lambda}
\def\Lg{\Lambda}
\def\m{\mu}
\def\n{\nu}
\def\r{\rho}
\def\s{\sigma}
\def\Sg{\Sigma}
\def\ph{\varphi}
\def\Ph{\Phi}
\def\x{\xi}
\def\om{\omega}
\def\Om{\Omega}

\newtheorem{theorem}{Theorem}[section]
\newtheorem{proposition}[theorem]{Proposition}
\newtheorem{lemma}[theorem]{Lemma}
\newtheorem{corollary}[theorem]{Corollary}

\newtheorem{teorema}{Teorema}[section]
\newtheorem{proposizione}[teorema]{Proposizione}
\newtheorem{corollario}[teorema]{Corollario}

\newtheorem{definition}[theorem]{Definition}

\newtheorem{definizione}[teorema]{Definizione}

\newtheorem{remark}[theorem]{Remark}

\newtheorem{osservazione}[teorema]{Osservazione}
\newtheorem{esempio}[teorema]{Esempio}
\newtheorem{esercizio}[teorema]{Esercizio}
\newtheorem{congettura}[teorema]{Congettura}

\def\definedby{\mathrel{\mathop:}=}
\def\Domain{D}
\def\HoloF{F}
\def\RightHalfSpace#1{H_{#1}^+}
\def\amFunc{u}
\def\cmFunc{u}
\def\cmDomain{I}
\def\tzero{t}
\def\APrev#1{{#1}_0}
\def\AFirst#1{{#1}_1}
\def\ASecond#1{{#1}_2}
\def\ALarge{a}

\def\SCoeff{a}
\def\ECoeff{A}
\def\SignCos{+}

\def\amSet{E}
\def\cmE{E}
\def\invZ{L}

\def\beur{\nu}
\def\Primes{P}
\def\PrimeExponent{\mu}

\def\NumField{K}
\def\NumInt{\mathfrak{o}}
\def\NumIdeal{I}
\def\PrimeIdeal{\mathfrak{p}}

\def\Conj[#1]{{#1}^*}

\def\Ping{A}
\def\Pong{B}
\def\ping{a}
\def\pong{b}

\nocite{article:BatemanOnInghamNoZeroes}
\nocite{article:NarasimhanRemaqueSurZeta}
\nocite{article:InghamNoteOnRiemannZeta}
\nocite{book:InghamTheDistributionOfPrimeNumbers}
\nocite{article:ShapiroOnNonVanishingOfElle}
\nocite{article:NewmanNaturalProofElleNotZero}
\nocite{article:WintnerFundLemmaDirichlet}

\begin{abstract}
Let
\begin{equation*}
	\invZ(s)=\sum_{n=1}^{+\infty}\dfrac{a(n)}{n^s}
\end{equation*}
be a Dirichlet series
were $a(n)$ is a bounded completely multiplicative function.

We prove that if $\invZ(s)$ extends to
a holomorphic function on the open half space
$\Re s >1-\delta$, $\delta>0$
and $\invZ(1)=0$ then such a half space 
is a zero free region of the Riemann zeta function $\zeta(s)$.

Similar results is proven for completely multiplicative functions
defined on the space of the ideals of the ring of the algebraic integers
of a number field of finite degree.
\end{abstract}

\section{\label{section:Intro}Introduction}
The prime numbers theorem and the Dirichlet theorem on primes in arithmetic progressions
(and their generalizations to number fields) rely
respectively on the fact that
\begin{equation}\label{stm::ZetaNotZero}
	\zeta(1+it)\neq0
\end{equation}
for each real $t\neq0$
and that
\begin{equation}\label{stm::ElleNotZero}
	L(\chi,1)\neq0
\end{equation}
for each non principal character $\chi:\NN^+\to\CC$.

Further results on the density of the primes on arithmetic progressions
follows from the inequality
\begin{equation}\label{stm::ElleNotZeroBis}
L(\chi,1+it)\neq0
\end{equation}
for each real $t$.

In the above statements $\zeta(s)$ 
is the famous Riemann zeta function,
which is meromorphic on the whole complex plane and coincides with
the Dirichlet series
\begin{equation*}
	\sum_{n=1}^{+\infty}\dfrac{1}{n^s}
\end{equation*}
when $\Re s>1$,
while $L(\chi,s)$ is
the \emph{Dirichlet $L-$function} associated to the character $\chi$,
which (when $\chi$ is not principal) is
a holomorphic entire function which when $\Re s>1$ coincides with the series
\begin{equation*}
	\sum_{n=1}^{+\infty}\dfrac{\chi(n)}{n^s}.
\end{equation*}

Let us recall that the Riemann zeta function 
is meromorphic on the whole complex plane,
has a unique simple pole at $s=1$ with residue $1$.

The zeroes of $\zeta(s)$ are the so called \emph{trivial zeroes}
\begin{equation*}
	\zeta(-2m)=0,\ m=1,2,\ldots
\end{equation*}
and the remaining ones $\rho$ satisfies
\begin{equation*}
0<\Re\rho<1.
\end{equation*}
The famous \emph{Riemann hypothesis} is that all of them are on the \emph{critical line}
\begin{equation*}
\Re s=\dfrac{1}{2}.
\end{equation*}
Nevertheless it is known that infinite of them are on the critical line.

Here, and in the rest of the paper, $\NN^+$ is the set of the positive integers
and if
\begin{equation*}
a:\NN^+\to\CC
\end{equation*}
is an arbitrary bounded arithmetic function then
\begin{equation*}
	\invZ(a, s)=\sum_{n=1}^{+\infty}\dfrac{a(n)}{n^s}
\end{equation*}
is the corresponding associated Dirichlet series.

We recall that a \emph{Dirichlet character} is an arithmetic function,
that is a function
\begin{equation*}
	\chi:\NN^+\to\CC
\end{equation*}
which is \emph{completely multiplicative}, that is
$\chi(1)=1$ and
\begin{equation*}
\chi(mn)=\chi(m)\chi(n)
\end{equation*}
for each pair of $m,n\in\NN^+$
and which also is periodic of period $q>1$, that is
$\chi(n+q)=\chi(n)$ for each $n\in\NN^+$ 
and $\chi(k)=0$ if $k$ and $q$ are not relatively prime.

Clearly any Dirichlet character $\chi$ is a bounded function
and it easy to show that values of a bounded completely multiplicative functions
are complex numbers $z$ which satisfies $\abs{z}\leq1$.

There are several methods in the literature to achieve \eqref{stm::ElleNotZeroBis}.

One of them is an easy consequence of the following
remarkable result of Ingham \cite{article:InghamNoteOnRiemannZeta}.

\begin{theorem}\label{stm::InghamBase}
Let
\begin{equation*}
	\invZ(a, s)=\sum_{n=1}^{+\infty}\dfrac{a(n)}{n^s}
\end{equation*}
be a Dirichlet series 
where $a:\NN^+\to\CC$ is an arbitrary
bounded completely multiplicative arithmetic function.

Assume that $\invZ(a, s)$ extends to a holomorphic function
on the open half space
\begin{equation*}
	\Re s>\dfrac{1}{2}-\delta
\end{equation*}
with $\delta>0$.
Then
\begin{equation*}
	\invZ(a, 1+it)\neq0
\end{equation*}
for each $t\in\RR$.

\end{theorem}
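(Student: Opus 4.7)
The plan is to argue by contradiction, using the classical de la Vallée Poussin trigonometric trick adapted to bounded completely multiplicative coefficients. Since $|a(p)|\le 1$, I write $a(p)=r_p e^{i\theta_p}$ with $0\le r_p\le 1$, and observe that by taking the logarithm of the Euler product, for $\Re s>1$,
\begin{equation*}
\Re\log\invZ(a,\sigma+it)=\sum_p\sum_{k\ge 1}\frac{r_p^k\cos\bigl(k(\theta_p-t\log p)\bigr)}{kp^{k\sigma}}.
\end{equation*}

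The key trigonometric input is the elementary inequality
\begin{equation*}
3+4r\cos\phi+r^2\cos 2\phi\ge 0\qquad(0\le r\le 1,\ \phi\in\RR),
\end{equation*}
which holds because the left side, regarded as a quadratic in $\cos\phi$, has leading coefficient $2r^2\ge 0$ and discriminant $-8r^2(1-r^2)\le 0$. Applied termwise at each prime power with $r=r_p^k$ and $\phi=k(\theta_p-t_0\log p)$ and weighted by $1/(kp^{k\sigma})$, this yields the pointwise positivity
\begin{equation*}
3\Re\log\zeta(\sigma)+4\Re\log\invZ(a,\sigma+it_0)+\Re\log\invZ(a^2,\sigma+2it_0)\ge 0\quad(\sigma>1),
\end{equation*}
where $a^2(n)\definedby a(n)^2$ is again bounded completely multiplicative.

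Supposing for contradiction that $\invZ(a,1+it_0)=0$ with order $m\ge 1$ and letting $\sigma\to 1^+$, the simple pole of $\zeta$ at $1$ contributes $-3\log(\sigma-1)+O(1)$, the hypothesized zero contributes $4m\log(\sigma-1)+O(1)$, and, provided the third term is bounded as $\sigma\to 1^+$, the left hand side behaves like $(4m-3)\log(\sigma-1)\to -\infty$, a contradiction since $4m-3\ge 1$.

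The main obstacle is the boundedness of $\Re\log\invZ(a^2,\sigma+2it_0)$, since the hypothesis furnishes a holomorphic extension of $\invZ(a,\cdot)$ but not of $\invZ(a^2,\cdot)$. The useful structural identity is
\begin{equation*}
\invZ(a^2,s)=\invZ(a,s/2)\,\invZ(\lambda a,s/2),
\end{equation*}
where $\lambda(n)=(-1)^{\Omega(n)}$ is the Liouville function and $\lambda a$ is bounded completely multiplicative; this reduces the question to an extension property of $\invZ(\lambda a,\cdot)$, which I would hope to settle by exploiting the extension of $\invZ(a,\cdot)$ to $\Re s>1/2-\delta$ through a prime-sum comparison on the logarithmic scale. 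The case $t_0=0$ is genuinely different, because $\invZ(a^2,s)$ may legitimately have a pole at $s=1$ (for instance when $a$ is a real quadratic Dirichlet character); I would handle it separately by invoking Landau's theorem on an auxiliary Dirichlet series with non-negative coefficients, using the strong extension hypothesis to force a contradiction with the divergence of partial sums on the critical line.
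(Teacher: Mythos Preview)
Your argument has a genuine gap at exactly the point you flag yourself: you need $\log\abs{\invZ(a^2,\sigma+2it_0)}$ bounded above as $\sigma\to1^+$, but the hypothesis gives analytic continuation only for $\invZ(a,\cdot)$, not for $\invZ(a^2,\cdot)$. The trivial bound $\abs{\invZ(a^2,\sigma+2it_0)}\le\zeta(\sigma)\sim(\sigma-1)^{-1}$ leaves the total exponent at $(4m-4)\log(\sigma-1)$, which vanishes for a simple zero $m=1$ and gives no contradiction. Your proposed rescue via $\invZ(a^2,s)=\invZ(a,s/2)\,\invZ(\lambda a,s/2)$ is circular: the identity is valid for $\Re s>2$, and to push it to $\Re s$ near $1$ you would need analytic continuation of $\invZ(\lambda a,\cdot)$ to $\Re s>1/2$, which is not implied by the continuation of $\invZ(a,\cdot)$ (flipping signs at primes can completely change the analytic behaviour). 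The vague ``prime-sum comparison'' and the separate Landau-type handling of $t_0=0$ are not proofs. Note also that your scheme never uses the specific threshold $1/2-\delta$; a working $3$--$4$--$1$ argument would only need holomorphy near $\Re s=1$, which already suggests the approach is not aligned with the hypothesis.

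The paper avoids $\invZ(a^2,\cdot)$ altogether. After translating by $it_0$ (so that the coefficients become $a(n)n^{-it_0}$, still bounded completely multiplicative, and the zero sits at $s=1$), it forms $F(s)=\zeta(s)^2\invZ(s)\overline{\invZ(\bar s)}$, whose logarithm for $\Re s>1$ is the Dirichlet series $\sum 2\bigl(1+\Re a(n)\bigr)\Lambda(n)\big/(n^s\log n)$ with nonnegative coefficients. Since $\invZ(1)=0$ kills the double pole of $\zeta^2$, $F$ is holomorphic on $\Re s>1/2-\delta$; a Pringsheim-type argument (Theorem~\ref{stm::cmHoloExtension}) then shows $\log F$ extends there too, so $F$ is zero-free, forcing $\zeta(s)\neq0$ on $\Re s>1/2-\delta$. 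This contradicts the existence of zeros of $\zeta$ on the critical line. The key difference is the choice of auxiliary product: $\zeta^2\invZ\overline{\invZ}$ uses only functions whose continuation is granted, whereas your $\zeta^3\invZ^4\invZ(a^2,\cdot)$ does not.
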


When 
$a=\chi$, a non principal Dirichlet character, one obtain \eqref{stm::ElleNotZeroBis}.

The Ingham proof of theorem \ref{stm::InghamBase} is quite involved,
but a very simple proof is given by Batemen in \cite{article:BatemanOnInghamNoZeroes}.

The first result of this paper is the following.

\begin{theorem}\label{stm::ZetaRiemannZeroFreeRegion}
Let $\APrev{\sigma}<1$ be given.

If there exists a completely multiplicative bounded arithmetic function
\begin{equation*}
	a:\NN^+\to\CC
\end{equation*}
such that the Dirichlet series
\begin{equation*}
	\invZ(a,s)=\sum_{n=1}^{\infty}\dfrac{a(n)}{n^s}
\end{equation*}
extends holomorphically on the
open half space
\begin{equation*}
	\Re s>\APrev{\sigma}
\end{equation*}
and
\begin{equation*}
	\invZ(a,1)=0
\end{equation*}
then
\begin{equation*}
	\zeta(s)\neq0
\end{equation*}
for each $s$ satisfying $\Re s>\APrev{\sigma}$.

\end{theorem}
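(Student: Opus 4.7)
I would study the auxiliary function
\begin{equation*}
\Psi(s) \;\definedby\; \zeta(s)^{2}\,\invZ(a,s)\,\invZ(\bar a,s),
\end{equation*}
where $\bar a$ is the arithmetic function $n\mapsto\overline{a(n)}$, and deduce $\zeta(s)\neq 0$ on $\Re s>\APrev{\sigma}$ from two applications of Landau's theorem for Dirichlet series with non-negative coefficients. The function $\bar a$ is also bounded and completely multiplicative; the relation $\invZ(\bar a,s)=\overline{\invZ(a,\bar s)}$ shows that $\invZ(\bar a,\cdot)$ extends holomorphically to $\Re s>\APrev{\sigma}$ alongside $\invZ(a,\cdot)$, with $\invZ(\bar a,1)=\overline{\invZ(a,1)}=0$. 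Hence the double pole of $\zeta^{2}$ at $s=1$ is exactly compensated by the double zero of $\invZ(a,\cdot)\invZ(\bar a,\cdot)$, so $\Psi$ is holomorphic throughout $\Re s>\APrev{\sigma}$.

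Combining the Euler products of $\zeta$ and of $\invZ(a,\cdot)$ (the latter being available thanks to complete multiplicativity), a short calculation gives
\begin{equation*}
\log\Psi(s) \;=\; \sum_{p,\,k\geq 1}\frac{2\bigl(1+\Re a(p)^{k}\bigr)}{k\,p^{ks}}, \qquad \Re s>1,
\end{equation*}
and since $|a(p)|\leq 1$ forces $\Re a(p)^{k}\geq -1$, every coefficient here is non-negative. Exponentiating, $\Psi(s)=\sum_{n\geq 1}c_n/n^s$ on $\Re s>1$ with $c_n\geq 0$ and $c_1=1$. A first use of Landau's theorem, together with the holomorphy of $\Psi$ on $\Re s>\APrev{\sigma}$, shows that the abscissa of convergence of $\sum c_n/n^s$ is at most $\APrev{\sigma}$, whence $\Psi(\sigma)\geq c_1=1$ for every real $\sigma>\APrev{\sigma}$; in particular $\log\Psi$ is finite and real-analytic on $(\APrev{\sigma},\infty)$.

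A second application of Landau's theorem to the Dirichlet series for $\log\Psi$---whose coefficients are also non-negative---then yields that its abscissa of convergence is at most $\APrev{\sigma}$. The series thus defines a single-valued analytic function $\widetilde f$ on the connected half-plane $\Re s>\APrev{\sigma}$ coinciding with the principal branch of $\log\Psi$ on $\Re s>1$. The exponential $e^{\widetilde f}$ is then analytic and nowhere vanishing on $\Re s>\APrev{\sigma}$ and agrees with $\Psi$ on $\Re s>1$, so by the identity principle $e^{\widetilde f}=\Psi$ throughout $\Re s>\APrev{\sigma}$. In particular $\Psi\neq 0$ on that region; since $\zeta^{2}$ is a factor of $\Psi$ (with $\zeta(1)=\infty$), this forces $\zeta(s)\neq 0$ for every $s$ with $\Re s>\APrev{\sigma}$.

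The crux is the second use of Landau: only the real-axis lower bound $\Psi(\sigma)\geq 1$ produced by the first application rules out real-axis singularities of $\log\Psi$ on $(\APrev{\sigma},\infty)$ and thereby lets the $\log\Psi$ series extend, past any potential complex zero of $\Psi$, to the whole half-plane $\Re s>\APrev{\sigma}$.
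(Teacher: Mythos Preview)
Your argument is correct, and at the structural level it coincides with the paper's: you form the same auxiliary function $\Psi(s)=\zeta(s)^2\,L(a,s)\,\overline{L(a,\bar s)}$, observe that $\log\Psi$ is a Dirichlet series with non-negative coefficients, and deduce that $\Psi$ is nowhere zero on $\Re s>\sigma_0$. The difference lies in how that last deduction is carried out. The paper packages it into an abstract non-vanishing principle (its Theorem~1.4): if $f$ is completely monotone on a real half-line and $F=\exp f$ continues holomorphically to a larger half-plane, then so does $f$, hence $F\neq 0$ there; the proof goes through Pringsheim's theorem and an inverse-function argument comparing the Taylor radii of $f(a-z)$ and $F(a-z)$. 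Your route instead applies Landau's theorem twice---first to $\Psi$ itself to secure $\Psi(\sigma)\geq 1$ on the real axis, then to $\log\Psi$ to push its abscissa of convergence down to $\sigma_0$. This is essentially Bateman's simplification (which the paper cites), and it has the virtue of using only the classical Landau lemma with no new machinery; on the other hand the paper's Theorem~1.4 is formulated for arbitrary completely monotone functions, not just Dirichlet series, which is what allows the later generalizations in Sections~5--6. A minor stylistic point: the paper deliberately avoids invoking Euler products and derives the exponential representation of $L(a,s)$ directly from the convolution identity $\sum_{d\mid n}\Lambda(d)=\log n$ (Lemma~4.1), whereas you appeal to the Euler product; the resulting formula for $\log\Psi$ is of course identical.
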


The theorem above,
together with the fact that the Riemann zeta function $\zeta(s)$
has (infinite) zeroes on the line $\Re s=1/2$,
easily implies the Ingham result;
see at the end of section \ref{section:MainA} for details.

When $a=\chi$, where $\chi$ is  a non principal Dirichlet character it is easy to see
that the corresponding Dirichlet series $L(\chi, s)$ extends holomorphically
on the half space $\Re s>0$
(actually $L(\chi, s)$ extends holomorphically as an entire function).
If 
\begin{equation*}
L(\chi,1)=0
\end{equation*}
for some non principal Dirichlet character $\chi$
then Theorem \ref{stm::ZetaRiemannZeroFreeRegion}
implies that the Riemann zeta function $\zeta(z)$
wouldn't have any zero on the half space $\Re s>0$,
which is clearly absurd.
The observation above explains the title of this paper.

The author is not able to give any example of a Dirichlet series
$\invZ(a,s)$
as in Theorem \ref{stm::ZetaRiemannZeroFreeRegion}
such that satisfies $\invZ(a,1)=0$ and
is holomorphic
on a half space
\begin{equation*}
	\Re s>\sigma_0
\end{equation*}
with
\begin{equation*}
	\dfrac{1}{2}\leq\sigma_0<1.
\end{equation*}
Indeed the existence of such a series for $\sigma_0=1/2$,
combined with our Theorem \ref{stm::ZetaRiemannZeroFreeRegion},
would imply the Riemann hypothesis,
and as far I know the existence of an
half space $\Re s >\sigma_0$ with $1/2<\sigma_0<1$
which is a zero free region for the Riemann zeta function is
also an open conjecture.

Nevertheless we observe that the converse of
Theorem \ref{stm::ZetaRiemannZeroFreeRegion} also holds.

Indeed if $\lambda(n)$ denotes the \emph{Liouville function}
(see next section for details)
then the meromorphic function
\begin{equation*}
	\invZ(\lambda,s)
	=\sum _{n=1}^{\infty }\dfrac{\lambda(n)}{n^s}
	=\dfrac{\zeta (2s)}{\zeta (s)}
\end{equation*}
satisfies
$
	\invZ(\lambda,1)=0
$ 
and obviously
$\invZ(\lambda,s)$ is holomorphic on the half space
$\Re s>\sigma, 1/2\leq\sigma<1$
if, and only if,
such a half space is a zero free region for $\zeta(s)$.

The proof of Theorem \ref{stm::ZetaRiemannZeroFreeRegion}
is obtained as an elementary consequences of
a general non vanishing principle
for holomorphic functions which are analytic continuations
of exponentials of completely monotone functions
which we think of independent interest.

Let us recall that
a $C^{\infty}$ function
\begin{equation*}
	\cmFunc:\cmDomain\to\RR
\end{equation*}
where $\cmDomain$ is a interval of $\RR$ is
\emph{completely monotone} if for $k=0,1,\ldots$
\begin{equation*}
	(-1)^k\cmFunc^{(k)}(x)\geq0
\end{equation*}
for each $x\in\cmDomain$.

Then our result is the following.

\begin{theorem}\label{stm::cmHoloExtension}
Let $\AFirst{\sigma},\ASecond{\sigma}\in\RR$ with
$\AFirst{\sigma}<\ASecond{\sigma}$
and let
$
f(s)\ 
$
be a holomorphic function defined on the open half space
\begin{equation*}
\Re s >\ASecond{\sigma}.
\end{equation*}

Assume that the restriction of $f(s)$ to the half line
$
]\ASecond{\sigma},+\infty[
$
is a real completely monotone function
and
\begin{equation*}
F(s)\definedby\exp f(s)
\end{equation*}
extends holomorphically on the open half space
\begin{equation*}
	\Re s >\AFirst{\sigma},
\end{equation*}

Then the function $f(s)$ also extends holomorphically
on the open half space
\begin{equation*}
	\Re s >\AFirst{\sigma}
\end{equation*}
and hence
\begin{equation*}
	F(s)=\exp f(s)\neq0
\end{equation*}
when $\Re s>\AFirst{\sigma}$.
\end{theorem}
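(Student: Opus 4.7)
The plan is to combine Bernstein's integral representation of completely monotone functions with Landau's principle --- the Laplace transform of a positive measure has a real singularity at its abscissa of convergence. First, since the restriction of $f(s)$ to $(\ASecond{\sigma},+\infty)$ is completely monotone, Bernstein's theorem will yield a positive Borel measure $\mu$ on $[0,+\infty)$ with
\begin{equation*}
f(x)=\int_{0}^{+\infty}e^{-(x-\ASecond{\sigma})t}\,d\mu(t),\qquad x>\ASecond{\sigma}.
\end{equation*}
Denoting the integral on the right, viewed as a function of complex $s$, by $\tilde f(s)$, I would let $\sigma_c\in[-\infty,\ASecond{\sigma}]$ be its abscissa of convergence. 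Then $\tilde f$ is holomorphic on $\Re s>\sigma_c$ and extends $f$; the goal will reduce to proving $\sigma_c\le\AFirst{\sigma}$, for then $f$ extends holomorphically to $\Re s>\AFirst{\sigma}$ and the identity $F=\exp f$ gives the non-vanishing claim.

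To prove the reduced goal I would argue by contradiction, assuming $\sigma_c>\AFirst{\sigma}$. Positivity of $\mu$ gives $\tilde f(x)\ge 0$ on $(\sigma_c,+\infty)$, hence $F(x)=\exp\tilde f(x)\ge 1$ there. By hypothesis $F$ is holomorphic on some open neighbourhood $U$ of $\sigma_c$, so continuity forces $F(\sigma_c)\ge 1>0$, and $F$ is therefore non-vanishing on a simply connected open neighbourhood $V\subset U$ of $\sigma_c$. Picking a holomorphic branch of $\log F$ on $V$, on the connected set $V\cap\{\Re s>\sigma_c\}$ this branch will differ from $\tilde f$ by a constant of the form $2\pi i k$, so $\tilde f$ itself extends holomorphically across $\sigma_c$ --- contradicting Landau's theorem. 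This will force $\sigma_c\le\AFirst{\sigma}$, completing the proof.

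The hard part --- really, the single crucial ingredient --- will be the invocation of Landau's theorem on the real-axis singularity at the abscissa of convergence of a Laplace transform with non-negative density. Once this is available, everything else is automatic: positivity of $\mu$ supplies both the Bernstein representation and the lower bound $F\ge 1$ on the real axis, and the latter is precisely what prevents $F$ from vanishing at the putative abscissa $\sigma_c$, thereby allowing $f$ to be recovered from a branch of $\log F$ in a full neighbourhood of $\sigma_c$.
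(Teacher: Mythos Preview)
Your proof is correct and takes a genuinely different route from the paper's. The paper never invokes Bernstein's representation; instead it works entirely with power series. Fixing $a>\sigma_2$, it considers $f_a(z)=f(a-z)$ and $F_a(z)=F(a-z)$, observes that complete monotonicity of $f$ makes the Taylor coefficients of $f_a$ at $0$ non-negative, and then proves a general lemma (Proposition~3.2): if $F_a=E(f_a)$ for a suitable entire $E$ and $f_a$ has non-negative coefficients, then $f_a$ and $F_a$ have the \emph{same} radius of convergence. The lemma is proved by the same mechanism you use --- positivity forces $F_a(x)>E(0)$ on a real interval past the radius $r$ of $f_a$, so $E^{-1}\circ F_a$ gives a real-analytic extension of $f_a$ past $r$, contradicting Pringsheim --- but packaged at the level of Taylor series rather than Laplace transforms. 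Letting $a\to\infty$ sweeps out the half-plane. Your approach trades the elementary power-series argument for the heavier input of Bernstein--Widder, which buys you a global integral representation and lets you appeal to the Laplace-transform version of Landau directly; the paper's approach stays closer to first principles and, via the general entire function $E$ in Proposition~3.2, is set up to handle compositions other than $\exp$. At bottom both arguments exploit the same phenomenon: positivity keeps $F$ bounded away from zero on the real axis, so $f$ can be recovered from $F$ there, and Pringsheim/Landau then forbids a real obstruction.
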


It is quite surprising that our approach
makes it unnecessary to use anywhere
the standard Euler product expansion of such Dirichlet series.

Actually the Euler product expansion is necessary to prove the non vanishing
in the half space of absolute convergence of the Dirichlet series associate to
\emph{multiplicative} but not completely multiplicative functions, that is to
functions
\begin{equation*}
	a:\NN^+\to\CC
\end{equation*}
such that
\begin{equation*}
	a(mn)=a(m)a(n)
\end{equation*}
when $m$ and $n$ are relatively prime.
But in this paper we do not consider multiplicative arithmetic function.
For \emph{completely multiplicative} arithmetic functions
we obtain directly their representation as exponential of Dirichlet series
without using its product expansion;
see (the proofs of)
Lemma \ref{stm::DiriInftyLim}, and 
Proposition \ref{stm::BeurExp} for details.

Let us now describe the content of the paper.

In secion \ref{section:Prereq} we recall
basic facts on arithmetic functions and 
the associated Dirichlet series that
we need in the rest of the paper.

The proof of
Theorem \ref{stm::cmHoloExtension}
and
Theorem \ref{stm::InghamBase}
are given respectively in
section \ref{section:Pringsheim} and \ref{section:MainA}.

In section \ref{section:PringsheimEx}
we give a refined version of theorem 
\ref{stm::cmHoloExtension} which is used
in section \ref{section:MainEx}
to prove theorem \ref{stm::ZetaBeurZeroFreeRegion},
the main result of this paper,
which extends to a class of generalized Dirichlet series
the above theorem \ref{stm::InghamBase},
including,
among the others,
the Dirichlet series associated to
completely multiplicative functions
defined on the ideals of the
ring of the integers of
a number fields.

Non vanishing theorem 
for general $L-$type functions on the boundary of
the half space of absolute convergence
are then obtained
in section \ref{section:Corollaries}.

We end this introduction
with a ``toy application'' of Theorem \ref{stm::cmHoloExtension}
giving three proofs of
$ 
\zeta(1+it)\neq0.
$ 
Each of them contains  ``themes'' which will be expanded in the rest of the paper.

All of them start observing that when $\Re s>1$ we have
\begin{equation*}
	\zeta(s)=\exp\left(\sum_{n=1}^{\infty}\dfrac{\Lambda (n)}{n^s\log n}\right)
\end{equation*}
where $\Lambda(n)$ is the \emph{von Mangolt function} (see, e.g., next section).

Assume that 
\begin{equation*}
	\zeta(1+i\tzero)=0
\end{equation*}
for some $\tzero>0$.
Following \cite[pag. 199]{article:OggOnSatoTateConjecture}
consider the function
\begin{equation*}
	F(s)=\zeta(s)^2\zeta(s+i\tzero)\zeta(s-i\tzero).
\end{equation*}
Then $F(s)$ has removable singularities at $s=1$, $s=1+i\tzero$ and $s=1-i\tzero$
and hence is a holomorphic entire function.

When $\Re s>1$ we have then
\begin{equation*}
	F(s) 
	=\exp f(s).
\end{equation*}
where
\begin{equation*}
	f(s)
	=\sum_{n=1}^{\infty}\dfrac{2\bigl(1+\Re(n^{-i\tzero})\bigr)\Lambda (n)}{n^s\log n}.
\end{equation*}
Since $\Re n^{-i\tzero}=\cos\bigl(\tzero\log(n)\bigr)\geq-1$
the Dirichlet series $f(s)$
has non negative coefficients
and hence the function 
\begin{equation*}
	]1,+\infty[\ni\sigma\mapsto f(\sigma)
	\in\RR
\end{equation*}
is completely monotone.

Theorem \ref{stm::cmHoloExtension} implies that
$f(s)$ extents to an entire holomorphic function and
$F(s)=\exp f(s)$ is a not vanishing entire holomorphic function;
a classical Landau's theorem
(see theorem \ref{stm::LandauStandard})
implies that
the series $f(s)$ then converges for all $s\in\CC$.

Now we have three ways to conclude the proof.

The first one begins by
observing that at $s=1+i\tzero$ the factor
$\zeta(s)^2$ of the function $F(s)$ has a zero of the second order
while the factor $\zeta(s-i\tzero)$ has a simple pole.

Since $F(1+i\tzero)\neq0$ 
then the remaining factor $\zeta(s+i\tzero)$
must necessarily have a simple pole at $s=1+i\tzero$, that is
the Riemann zeta function $\zeta(s)$ would have
also an other pole at $s=1+2i\tzero$,
which is absurd.

The second one 
follows from the fact that if $F(s)$ never vanishes then
$\zeta(s)$ also  never vanishes when $s\neq1$.
In particular it follows that
$\zeta(s)$ does not vanishes
neither when $s=-2m$, $m=1,2,\ldots$
nor when $\Re s=1/2$
and this is not the case $\ldots$

For the third
let denote by $\Primes=\{2,3,5,\ldots\}$ the set of the positive prime numbers and
set $a(n)=n^{-i\tzero}$.
If $\sigma\in\RR$ then
\begin{eqnarray*}
	f(\sigma)&=&\sum_{n=1}^{\infty}
	\dfrac{2\bigl(1+\Re a(n)\bigr)\Lambda(n)}{n^s\log(n)}
	=\sum_{p\in\Primes}\sum_{m=1}^\infty\dfrac{2\bigl(1+\Re a(p)^m\bigr)}{m p^{m\sigma}}\\
	&\geq&\sum_{p\in\Primes}\sum_{m=1}^2\dfrac{2\bigl(1+\Re a(p)^m\bigr)}{m p^{m\sigma}}\\
	&\geq&\sum_{p\in\Primes}\dfrac{\bigl(2+\Re a(p)+\Re a(p)^2\bigr)}{p^{2\sigma}}.
\end{eqnarray*}

Observing that
\begin{equation*}
\boxed{
\abs{w}\leq1\implies\Re w+\Re w^2\geq-\dfrac{9}{8}
}
\end{equation*}
with equality at
\begin{equation*}
w=\dfrac{1}{4}\pm i\dfrac{\sqrt{15}}{4}
\end{equation*}
we then obtain
\begin{equation*}
f(\sigma)\geq\dfrac{7}{8}\sum_{p\in\Primes}\dfrac{1}{p^{2\sigma}}.
\end{equation*}
Since the series of the reciprocal of the prime numbers diverges
then the series $f(s)$
also diverges at $s=1/2$.

For a fourth (easy) proof see corollary \ref{stm::ElleZetaPNT}.

\section{\label{section:Prereq}Prerequisites}

In this paper we need only the basic results on
(multiplicative) arithmetic function and
their associated
Dirichlet series.
Basic references include the first chapters of
\cite{book:ApostolANT},
\cite{book:NarkiewiczANT3},
\cite{book:NeukirchANT},
\cite{book:MontgomeryVaughan2006},
\cite{book:OverholtCourseAnalyticNT} and
\cite{book:TenenbaumAnalyticNumberTheory}.

Here we review basic material that we need in this paper.

The already mentioned \emph{von Mangoldt function} is the arithmetic function
\begin{equation*}
\Lambda (n)=
\begin{cases}
\log p&\text{if }n=p^k\text{ for some prime }p\text{ and integer }k\geq 1,\\
0     &\text{otherwise.}
\end{cases}
\end{equation*}

It satisfies the identity
\begin{equation}\label{baseMangolt}
	\sum_{d|n}\Lambda(d)=\log n.
\end{equation}

The \emph{Liouville function} is the completely multiplicative functions
\begin{equation*}
\lambda (n)=(-1)^{\Omega (n)}.
\end{equation*}
where $\Omega(n)$ is the number of prime factors of $n$, counted with multiplicity.

The Liouville function and the Riemann zeta function are related by the identity
\begin{equation*}
	\dfrac{\zeta (2s)}{\zeta (s)}=\sum _{n=1}^{\infty }\dfrac{\lambda(n)}{n^s}
\end{equation*}

We will need the following elementary result on Dirichlet series.

\begin{lemma}\label{stm::beur::FirstCoeffBase}
Let
\begin{equation*}
F(s)=\sum_{n=1}^{\infty}\dfrac{a(n)}{n^s}.
\end{equation*}
where $a:\NN^+\to\CC$
is a bounded arithmetic function.
Then, 
\begin{equation*}
\lim_{\RR\ni\sigma\to\infty}F(\sigma)=a(1).
\end{equation*}
\end{lemma}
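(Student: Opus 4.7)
The plan is entirely elementary: one isolates the $n=1$ term and shows the remaining tail vanishes as $\sigma\to+\infty$. Write $M=\sup_{n\geq 1}\abs{a(n)}<+\infty$, which exists by hypothesis. For real $\sigma$ sufficiently large the series converges absolutely, and
\begin{equation*}
F(\sigma)-a(1)=\sum_{n=2}^{\infty}\dfrac{a(n)}{n^{\sigma}},
\end{equation*}
so that
\begin{equation*}
\abs{F(\sigma)-a(1)}\leq M\sum_{n=2}^{\infty}\dfrac{1}{n^{\sigma}}.
\end{equation*}

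The only thing to verify is that the right-hand side tends to $0$ as $\sigma\to+\infty$. I would do this by splitting off the largest term and comparing the rest to an integral: for $\sigma>1$,
\begin{equation*}
\sum_{n=2}^{\infty}\dfrac{1}{n^{\sigma}}
\leq\dfrac{1}{2^{\sigma}}+\int_{2}^{+\infty}\dfrac{dx}{x^{\sigma}}
=\dfrac{1}{2^{\sigma}}+\dfrac{2^{1-\sigma}}{\sigma-1},
\end{equation*}
and both terms on the right manifestly vanish in the limit $\sigma\to+\infty$. Combining this with the previous bound gives $\lim_{\sigma\to+\infty}F(\sigma)=a(1)$.

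There is no real obstacle here; the only minor care needed is ensuring the interchange of limit and summation, which is handled by the uniform bound $\abs{a(n)/n^{\sigma}}\leq M/n^{2}$ valid for all $\sigma\geq 2$ (so one could alternatively invoke dominated convergence and the pointwise fact $a(n)/n^{\sigma}\to 0$ for each fixed $n\geq 2$). The integral-comparison version above is self-contained and slightly more direct.
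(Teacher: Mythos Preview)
Your proof is correct and follows essentially the same approach as the paper. The paper actually states this particular lemma without proof (as an elementary fact), but it does prove the generalized version (Lemma~\ref{stm::beur::FirstCoeff}) by the same scheme: isolate the $n=1$ term and bound the tail; there the tail is handled by factoring out $\beur_0^{-(\sigma-\sigma_2)}$ from each term and using convergence at a fixed abscissa, whereas you bound $\sum_{n\geq2}n^{-\sigma}$ by integral comparison---a cosmetic difference only.
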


The following statement is a classical result of Landau 
(see e.g. \cite[Theorem 1.7, pag. 16]{book:MontgomeryVaughan2006}
or \cite[Lemma 1, pag. 314]{book:LangAlgebraicNumerTheory})

\begin{theorem}\label{stm::LandauStandard}
Let $\APrev{\sigma},\AFirst{\sigma}\in\RR$ with
$\APrev{\sigma}<\AFirst{\sigma}$
and let
\begin{equation*}
	f(s)=\sum_{n=1}^{\infty}\dfrac{a_n}{n^s}
\end{equation*}
be a Dirichlet series with non negative coefficients $a_n\geq0$.

Assume that $f(s)$ converges when $\Re s>\AFirst{\sigma}$
and extends to a holomorphic function on $\Re s>\APrev{\sigma}$.

Then the series $f(s)$ also converges when $\Re s>\APrev{\sigma}$.

\end{theorem}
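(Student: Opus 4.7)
The plan is to argue by contradiction on the abscissa of convergence. Let $\sigma_c$ denote the abscissa of convergence of the series $f(s)=\sum a_n/n^s$, so $\APrev{\sigma}\leq\sigma_c\leq\AFirst{\sigma}$, and assume that $\sigma_c>\APrev{\sigma}$. I will then pick a real anchor point $\sigma_2$ slightly bigger than $\sigma_c$ (where the series still converges absolutely, since $a_n\geq0$) and use holomorphy to push convergence below $\sigma_c$, contradicting the definition of $\sigma_c$.

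The key observation is that inside the convergence half-plane, one may differentiate the series term by term, so for every $k\geq0$
\begin{equation*}
(-1)^k f^{(k)}(\sigma_2)=\sum_{n=1}^{\infty}\dfrac{a_n(\log n)^k}{n^{\sigma_2}}\geq0.
\end{equation*}
Since $f$ extends holomorphically to $\Re s>\APrev{\sigma}$, the open disk centered at $\sigma_2$ of radius $\sigma_2-\APrev{\sigma}$ lies in the domain of holomorphy, and hence the Taylor series of $f$ around $\sigma_2$ converges throughout that disk. In particular, pick any real $\sigma_3$ with $\APrev{\sigma}<\sigma_3<\sigma_c$; then $|\sigma_3-\sigma_2|<\sigma_2-\APrev{\sigma}$, so
\begin{equation*}
f(\sigma_3)=\sum_{k=0}^{\infty}\dfrac{(-1)^k f^{(k)}(\sigma_2)}{k!}(\sigma_2-\sigma_3)^k.
\end{equation*}

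Now I substitute the Dirichlet expression for each $(-1)^k f^{(k)}(\sigma_2)$ and swap the order of the two summations. This is the crucial step, and it is justified because every term is non-negative, so Tonelli applies unconditionally. After the swap the inner sum over $k$ is the exponential series evaluated at $(\sigma_2-\sigma_3)\log n$, giving
\begin{equation*}
f(\sigma_3)=\sum_{n=1}^{\infty}\dfrac{a_n}{n^{\sigma_2}}\exp\bigl((\sigma_2-\sigma_3)\log n\bigr)=\sum_{n=1}^{\infty}\dfrac{a_n}{n^{\sigma_3}}.
\end{equation*}
This shows that the original Dirichlet series converges at $\sigma_3<\sigma_c$, contradicting the definition of the abscissa of convergence. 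Hence $\sigma_c\leq\APrev{\sigma}$ and the series converges on all of $\Re s>\APrev{\sigma}$.

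The only real obstacle is the interchange of the double sum; everything else is standard manipulation of Taylor series and term-by-term differentiation of Dirichlet series in the half-plane of convergence. Here the non-negativity hypothesis $a_n\geq0$ is used decisively: it ensures that each partial derivative has a definite sign, making all entries of the double sum non-negative so that Tonelli applies without any delicate estimate. Without this sign condition the argument breaks down, which is exactly why the statement of the theorem insists on non-negative coefficients.
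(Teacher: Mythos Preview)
Your proof is correct and is precisely the classical argument for Landau's theorem. The paper itself does not supply a proof of this statement: it is quoted as a classical result with references to Montgomery--Vaughan and Lang, so there is no in-paper proof to compare against. Your write-up matches the standard textbook treatment (choose a real anchor just right of the abscissa of convergence, expand in Taylor series using the holomorphic extension, and interchange the two non-negative sums via Tonelli to push convergence past $\sigma_c$); nothing is missing.
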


\section{\label{section:Pringsheim}A non vanishing principle}
In this section we prove
theorem \ref{stm::cmHoloExtension}.

We begin recalling a theorem due to Pringsheim
which appeared in
\cite{article:Pringsheim1894AnCont}
also known as the Pringsheim-Vivanti theorem.
See also
\cite[Theorem 5.7.1]{book:HilleAFTVol1},
and
\cite[Theorem 8.2.2]{book:SansoneGerretsen}.

\begin{theorem}\label{stm::Pringsheim}
Let
\begin{equation*}
	f(z)=\sum_{n=0}^{\infty}a_nz^n
\end{equation*}
be a convergent power series with radius of convergence $R$,
with $0<R<+\infty$.

If $a_n\geq0$ for each $n$ then
it is not possible to extend $f(z)$ holomorphically
in a neighbourhood of $z=R$.

\end{theorem}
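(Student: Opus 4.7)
The plan is to argue by contradiction via the classical re-expansion of the power series at an interior real point close to $R$. Suppose $f(z)$ admits a holomorphic extension to an open disc $D(R,\rho)$ with $0<\rho<R$. Choose a real point $x_0\in(R-\rho/2,\,R)$. A simple plane-geometry check shows that the union $D(0,R)\cup D(R,\rho)$ contains an open disc $D(x_0,r)$ with $r>R-x_0$, and $f$ is holomorphic on this larger disc.

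Next I would compute the Taylor coefficients of $f$ at $x_0$. Term-by-term differentiation of the original power series inside $D(0,R)$ yields
\begin{equation*}
\dfrac{f^{(k)}(x_0)}{k!}=\sum_{n\geq k}\binom{n}{k}a_n x_0^{n-k}.
\end{equation*}
The crucial observation is that each such coefficient is non-negative, since $a_n\geq 0$ and $x_0>0$. Because $f$ is holomorphic on $D(x_0,r)$, the Taylor series of $f$ at $x_0$ converges absolutely on that disc; in particular we may pick a real $t$ with $R-x_0<t<r$, and conclude that
\begin{equation*}
\sum_{k=0}^{\infty}\sum_{n\geq k}\binom{n}{k}a_n x_0^{n-k}\,t^k<+\infty.
\end{equation*}

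The endgame is an exchange of order of summation via Tonelli's theorem, which is legitimate because every term of the double sum is non-negative. After swapping, the inner sum collapses by the binomial theorem, producing
\begin{equation*}
\sum_{n=0}^{\infty}a_n (x_0+t)^n<+\infty.
\end{equation*}
Since $x_0+t>R$, this contradicts the hypothesis that $R$ is the radius of convergence of $\sum a_n z^n$, and the contradiction proves the theorem.

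The main obstacle is really only bookkeeping: verifying that the disc $D(x_0,r)$ inside the extended domain of holomorphy can be taken with $r>R-x_0$, and justifying the Tonelli swap. Both steps rely essentially on the non-negativity hypothesis $a_n\geq 0$; without it, the re-expansion would still be a formal identity, but one could not deduce absolute convergence of $\sum a_n(R+\varepsilon)^n$ at a point beyond the circle of convergence.
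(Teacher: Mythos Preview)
Your argument is correct and is essentially the classical proof of Pringsheim's theorem: re-expand at a real point $x_0$ just below $R$, use non-negativity to guarantee that the Taylor coefficients at $x_0$ are non-negative, then swap the double sum by Tonelli to push convergence past $R$. The geometric step (finding $r>R-x_0$ with $D(x_0,r)\subset D(0,R)\cup D(R,\rho)$) is a routine verification once $x_0>R-\rho/2$.

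Note, however, that the paper does not supply its own proof of this theorem: it is stated as a known result and attributed to Pringsheim, with references to Hille and to Sansone--Gerretsen. So there is no ``paper's proof'' to compare against; your proposal simply fills in the standard argument that those references contain.
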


The following proposition is the basic technical tool
of the paper.

\begin{proposition}\label{stm::ExpRadiusIsEqualEx}	
Let
\begin{equation*}
	\cmE:\CC\to\CC
\end{equation*}
be a holomorphic entire function such that $\cmE(\RR)\sset\RR$,
$\cmE'(t)>0$ for each $t>0$ and
\begin{equation*}
	\lim_{t\to+\infty}\cmE(t)=+\infty.
\end{equation*}
Let
\begin{equation*}
f(z)=\sum_{n=1}^{+\infty}\SCoeff_nz^n
\end{equation*}
and
\begin{equation*}
F(z)=\sum_{n=1}^{+\infty}\ECoeff_nz^n
\end{equation*}
be two convergent powers series
with real coefficients
such that
\begin{equation*}
F(z)=\cmE\bigl(f(z)\bigr)
\end{equation*}
in a neighbourhood of $z=0$.

If the coefficients $\SCoeff_n$ of $f(z)$ are
not negative 
then
the series $f(z)$ and $F(z)$ 
have the same radius of convergence.

\end{proposition}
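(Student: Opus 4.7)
Let $r$ and $R$ denote the radii of convergence of $f(z)$ and $F(z)$ respectively. The plan is to show $R = r$ by establishing the two inequalities separately. The easy direction $R \geq r$ is immediate: since $\cmE$ is entire and $f$ is holomorphic on $\{|z| < r\}$, the composition $\cmE \circ f$ is holomorphic on $\{|z| < r\}$ and agrees with $F$ in a neighborhood of the origin, so by analytic continuation $F$ extends holomorphically to the whole disk $\{|z| < r\}$. The reverse inequality $R \leq r$ carries the substance.

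For $R \leq r$ I argue by contradiction, assuming $r < R$, so in particular $r$ is finite and $F$ is holomorphic in an open neighborhood of $z = r$. Because $\SCoeff_n \geq 0$, the restriction $f|_{[0,r)}$ is non-decreasing, hence $L := \lim_{\sigma \to r^-} f(\sigma) \in [0,+\infty]$ exists. If $L = +\infty$, then $F(\sigma) = \cmE(f(\sigma)) \to +\infty$ as $\sigma \to r^-$, contradicting continuity of $F$ at $r$. If $L = 0$, then all $\SCoeff_n$ vanish (a non-negative series cannot sum to $0$ with some positive term at a positive argument), forcing $r = +\infty$. Therefore $0 < L < \infty$, and by monotone convergence $\sum \SCoeff_n r^n = L$, so $f$ extends continuously to $r$ along the real axis with value $L$.

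The crucial step is to invert $\cmE$ locally at $L$. Since $L > 0$ the hypothesis gives $\cmE'(L) > 0$, so the holomorphic inverse function theorem supplies a holomorphic inverse $\cmE^{-1}$ defined on an open neighborhood $V$ of $\cmE(L) = F(r)$ and taking values in a neighborhood of $L$ on which $\cmE$ is injective. By continuity of $F$ at $r$, pick an open neighborhood $U \subset \CC$ of $r$ with $F(U) \subset V$, and define $g(z) := \cmE^{-1}(F(z))$, holomorphic on $U$. For real $\sigma$ in $U \cap [0,r)$ close enough to $r$, both $f(\sigma)$ and $g(\sigma)$ lie in the neighborhood of $L$ where $\cmE$ is injective and satisfy $\cmE(f(\sigma)) = F(\sigma) = \cmE(g(\sigma))$, so $g(\sigma) = f(\sigma)$. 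The identity principle then propagates $g = f$ on the connected component of $U \cap \{|z| < r\}$ containing that real interval, so $f$ extends holomorphically to the open set $\{|z| < r\} \cup U$, a neighborhood of $z = r$. This contradicts Theorem \ref{stm::Pringsheim}, which forces $z = r$ to be a singular point of $f$ on its circle of convergence. The main obstacle is precisely this extension step; the hypotheses $\cmE'(t) > 0$ on $(0, \infty)$ and $\cmE(t) \to +\infty$ are used exactly to guarantee $0 < L < \infty$ and the local invertibility of $\cmE$ at $L$.
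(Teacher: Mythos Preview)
Your argument is correct and follows essentially the same route as the paper's proof: show $R\ge r$ trivially, assume $r<R$, and manufacture a holomorphic extension of $f$ past $z=r$ via $E^{-1}\circ F$ to contradict Pringsheim's theorem. The only cosmetic differences are that the paper separates out the constant case (which you absorb into the $L=0$ discussion) and uses the \emph{global} real-analytic inverse $E^{-1}:(E(0),+\infty)\to(0,+\infty)$ on an interval $(0,r')$, whereas you invoke the local holomorphic inverse of $E$ at the single point $L$; both lead to the same extension near $z=r$.
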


\begin{proof}
If the function $f$ is constant then $F$ also is constant and
in this case the assertion is obvious.

We hence assume that $f$ is not constant.

Let $r$ and $R$ denote the radius of convergence respectively of
the series $f(z)$ and $F(z)$.

Then $F(z)=\cmE\bigl(f(z)\bigr)$ is
holomorphic on the disc $\abs{z}<r$.
Standard theorems of one complex variable imply that
$R\geq r$.

We now assume that $R>r$ and derive a contradiction.

Since $f$ is not constant $a_{n_0}>0$ for some $n_0>0$
and hence
\begin{eqnarray*}
	f(x)&=&\sum_{n=0}^{\infty}na_nx^{n}
	\geq a_{n_0}x^{n_0}>0,\\
	f'(x)&=&\sum_{n=1}^{\infty}na_nx^{n-1}
	\geq n_0a_{n_0}x^{n_0-1}>0
\end{eqnarray*}
when $0<x<r$.

Then if $0<x<r$ we have
\begin{equation*}
	F'(x)=\cmE'\bigl(f(x)\bigr)f'(x)>0
\end{equation*}
and hence,
by the Lagrange theorem,
the function $F(x)$ is strictly increasing on
the closed interval $[0,r]$.
Since $F(r)>F(0)$ then
\begin{equation*}
F(x)>F(0),\ 0<x< r'
\end{equation*}
for some $r'$ with
\begin{equation*}
r<r'<R.
\end{equation*}

The hypotheses on $\cmE$ implies that
the inverse function
\begin{equation*}
\cmE^{-1}:]\cmE(0),+\infty[\to]0,+\infty]
\end{equation*}
is well defined and real analytic.

Being
\begin{equation*}
	F(x)>F(0)=\cmE\bigl(f(0)\bigr)\geq\cmE(0)
\end{equation*}
when $0<x<r'$,
it follows that the function
\begin{equation*}
]0,r'[\ni x\mapsto u(x)\definedby\cmE^{-1}\bigl(F(x)\bigr)\in\RR
\end{equation*}
is a well defined real analytic function
which coincides with $f(x)$ when $0<x<r$.

The power expansion of $u(x)$ at $x=r$
then defines a holomorphic extension of the function $f(z)$
in a neighbourhood of $z=r$
and this contradicts theorem \ref{stm::Pringsheim}.

\end{proof}

We are now ready to prove theorem \ref{stm::cmHoloExtension}.

So, let
$\AFirst{\sigma},\ \ASecond{\sigma}\in\RR$ and
$f(s)$, $F(s)$ be given as in theorem \ref{stm::cmHoloExtension}.
Let us fix $\ALarge>\ASecond{\sigma}$.
Then the functions
\begin{equation*}
f_\ALarge(z)=f(a-z)
\end{equation*}
and
\begin{equation*}
F_\ALarge(z)=F(a-z).
\end{equation*}
are holomorphic respectively on the disks
$\abs{z}<\ALarge-\ASecond{\sigma}$ and 
$\abs{z}<\ALarge-\AFirst{\sigma}$.

The relation
$F_\ALarge(z)=\exp\bigl(f_\ALarge(z)\bigr)$
holds on the smaller disk $\abs{z}<\ALarge-\ASecond{\sigma}$
and for each integer $n\geq0$
\begin{equation*}
	f_\ALarge^{(n)}(0)=(-1)^n f^{(n)}(a)\geq0.
\end{equation*}
Theorem \ref{stm::ExpRadiusIsEqualEx}
(with $\cmE=\exp$)
implies then that the
radius of convergence of
the power series expansions of $f_\ALarge(z)$ and $F_\ALarge(z)$
at $z=0$ are the same.

Since $F_\ALarge(z)$ is holomorphic on the disk
$\abs{z}<\ALarge-\AFirst{\sigma}$
such a common radius of convergence is at least $\ALarge-\AFirst{\sigma}$.

Hence the function $f_\ALarge(z)$,
defined in the smaller disk 
$\abs{z}<\ALarge-\ASecond{\sigma}$,
extends holomorphically on the bigger disk
$\abs{z}<\ALarge-\AFirst{\sigma}$.

The formula
\begin{equation*}
	f(s)=f_\ALarge(a-s)
\end{equation*}
then defines a holomorphic extension of the function $f(s)$
on the disk $\abs{z}<\ALarge-\AFirst{\sigma}$.

The observation that the union of all the disks
\begin{equation*}
\abs{s-\ALarge}<\ALarge-\AFirst{\sigma},\ \ALarge>\ASecond{\sigma}
\end{equation*}
is the open half space
\begin{equation*}
\Re s>\AFirst{\sigma}
\end{equation*}
completes the proof.

\section{\label{section:MainA}Proof of theorem \ref{stm::ZetaRiemannZeroFreeRegion}}

Here the proof of theorem \ref{stm::ZetaRiemannZeroFreeRegion}.

So, let $\APrev{\sigma}<1$ and
let $a:\NN^+\to\CC$ be a bounded completely multiplicative functions.
Assume that the Dirichlet series
\begin{equation*}
	\invZ(a, s)=\sum_{n=1}^{+\infty}\dfrac{a(n)}{n^s}
\end{equation*}
extends holomorphically in the half space $\Re s>\APrev{\sigma}$
and
\begin{equation*}
	\invZ(a,1)=0.
\end{equation*}

We denote the function $L(a,s)$ simply with $L(s)$.

Since $a(n)$ is completely multiplicative and bounded then necessarily
\begin{equation*}
	\abs{a(n)}\leq1
\end{equation*}
for each $n\in\NN^+$
and hence the series $L(s)$
converges absolutely when $\Re s>1$.

The following lemma is well known,
but we include a very elementary proof.
\begin{lemma}\label{stm::DiriInftyLim}
The series $\invZ(s)$ satisfies
\begin{equation*}
\invZ(s)=\exp\left(
\sum_{n=2}^{\infty}\dfrac{a(n)\Lambda(n)}{n^s\log n}
\right)
\end{equation*}
when $\Re s>1$
\end{lemma}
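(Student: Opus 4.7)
The plan is to derive the claimed identity by obtaining a first-order ODE relating $\invZ(s)$ to the proposed exponent, without invoking any Euler product factorization. Starting from the standard identity $\log n=\sum_{d\mid n}\Lambda(d)$ recorded in \eqref{baseMangolt}, I multiply by $a(n)$ and use complete multiplicativity (so $a(n)=a(d)\,a(n/d)$ whenever $d\mid n$) to obtain
\begin{equation*}
a(n)\log n \;=\; \sum_{d\mid n} a(d)\,\Lambda(d)\,a(n/d).
\end{equation*}

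Dividing by $n^s$ and summing over $n\geq 1$, the left-hand side equals $-\invZ'(s)$ by termwise differentiation of an absolutely convergent Dirichlet series, while the right-hand side, re-indexed by $n=dm$, factors as a Dirichlet product
\begin{equation*}
\left(\sum_{d=1}^{\infty}\frac{a(d)\,\Lambda(d)}{d^s}\right)\!\left(\sum_{m=1}^{\infty}\frac{a(m)}{m^s}\right)
\;=\; \invZ(s)\sum_{d=1}^{\infty}\frac{a(d)\,\Lambda(d)}{d^s}.
\end{equation*}
All exchanges of summation are legitimate on $\Re s>1$ because $\abs{a(n)}\leq 1$ and $\Lambda(n)\leq\log n$, so the series in sight are dominated by $\sum n^{-\Re s}$ up to a harmless logarithmic factor.

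Now set $g(s)=\sum_{n=2}^{\infty}a(n)\Lambda(n)/(n^s\log n)$, which converges absolutely on $\Re s>1$ since $\Lambda(n)/\log n\leq 1$. Termwise differentiation yields $g'(s)=-\sum_{n=2}^{\infty}a(n)\Lambda(n)/n^s$, and the identity above therefore reads $\invZ'(s)=\invZ(s)\,g'(s)$, i.e.\ $\bigl(\invZ(s)\exp(-g(s))\bigr)'\equiv 0$ on the connected open half-plane $\Re s>1$. Hence $\invZ(s)\exp(-g(s))$ is constant there; letting $\sigma\to+\infty$ along the real axis, Lemma \ref{stm::beur::FirstCoeffBase} applied separately to $\invZ(s)$ and to $g(s)$ gives $\invZ(\sigma)\to a(1)=1$ and $g(\sigma)\to 0$, so the constant equals $1$ and $\invZ(s)=\exp g(s)$, as claimed.

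The only potential obstacle is the bookkeeping to justify the term-by-term manipulations (differentiation and the Dirichlet convolution), but since we work strictly inside the half-plane of absolute convergence this is entirely routine. Notably, the argument uses nothing about $a$ beyond the single identity $a(d)\,a(n/d)=a(n)$, in keeping with the author's stated goal of avoiding the Euler product.
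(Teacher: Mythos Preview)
Your proof is correct and follows essentially the same route as the paper's: derive the logarithmic derivative identity $\invZ'(s)=g'(s)\,\invZ(s)$ from \eqref{baseMangolt} and complete multiplicativity, conclude that $\invZ(s)\exp(-g(s))$ is constant, and identify the constant as $1$ via Lemma~\ref{stm::beur::FirstCoeffBase}. You have simply made explicit the Dirichlet-convolution computation that the paper summarizes with ``easily implies.''
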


\begin{proof}
Let
\begin{equation*}
f(s) =\sum_{n=2}^{\infty}\dfrac{a(n)\Lambda(n)}{n^s\log n}.
\end{equation*}
Then
\begin{equation*}
f'(s) =-\sum_{n=2}^{\infty}\dfrac{a(n)\Lambda(n)}{n^s}.
\end{equation*}
Since the arithmetic functions $a(n)$ and $\lambda(n)$ are completely multiplicative
then the identity \eqref{baseMangolt}
easily implies that
\begin{equation*}
f'(s)\invZ(s)=\invZ'(s).
\end{equation*}
Then the derivative of the function
\begin{equation*}
s\mapsto\exp\bigl(-f(s)\bigr)\invZ(s)
\end{equation*}
vanishes and hence is constant, that is
\begin{equation*}
\invZ(s)=c\exp\bigl(f(s)\bigr)
\end{equation*}
for some constant $c$.
Now put $s=\sigma\in\RR$;
and take the limit as $\sigma\to+\infty$;
lemma \ref{stm::beur::FirstCoeffBase} then implies
\begin{equation*}
1=a(1)=c\exp(0)=c.
\end{equation*}

\end{proof}

When $a(n)=1$ then $\invZ(s)=\zeta(s)$ and hence we obtain
\begin{equation*}
	\zeta(s)=\exp\left(
	\sum_{n=1}^{\infty}\dfrac{\Lambda(n)}{n^s\log n}
	\right)
	.
\end{equation*}
Consider the function
\begin{equation*}
	F(s)=\zeta(s)^2\invZ(s)\bar{\invZ(\bar{s})}.
\end{equation*}
Since $\invZ(1)=0$,
then the function $F(s)$ is holomorphic
on the open half  $\Re s >\APrev{\sigma}$
and when $\Re s>1$ also satisfies
\begin{equation*}
	F(s)=\exp f(s)
\end{equation*}
where
\begin{equation*}
f(s)=\sum_{n=1}^{\infty}\dfrac{2\bigl(1+\Re a(n)\bigr)\Lambda(n)}{n^s\log n}.
\end{equation*}
Clearly $f(s)$ is a Dirichlet series (absolutely) convergent when $\Re s>1$
with non negative coefficients
and hence the restriction of $f(s)$ to the real half line $]1,+\infty[$
is a completely monotone function.

Theorem \ref{stm::cmHoloExtension} implies that
$F(s)$ never vanishes 
on the half space $\Re s>\APrev{\sigma}$.
This forces
$\zeta(s)\neq0$
when $\Re s>\APrev{\sigma}$
and the proof of Theorem \ref{stm::ZetaRiemannZeroFreeRegion}
is completed.

We observe that the Ingham Theorem \ref{stm::InghamBase}
easily follows from the theorem \ref{stm::ZetaRiemannZeroFreeRegion}.

Indeed let 
\begin{equation*}
F(s)=\sum_{n=1}^{+\infty}\dfrac{a(n)}{n^s}
\end{equation*}
be as in theorem \ref{stm::InghamBase}
and assume that $F(1+it)=0$ for some $t\in\RR$.

Then the function $\invZ(s)=F(s+it)$ satisfies the hypotheses
of theorem \ref{stm::ZetaRiemannZeroFreeRegion}.
Indeed $\invZ(1)=0$ and when $\Re s>1$ we have
\begin{equation*}
	\invZ(s)=F(s+it)=\sum_{n=1}^{\infty}\dfrac{a(n)n^{-it}}{n^s}.
\end{equation*}
The map $n\mapsto a(n)n^{-it}$
is a bounded completely multiplicative arithmetic function.
Theorem \ref{stm::ZetaRiemannZeroFreeRegion} then implies that
the open half space $\Re s>1/2-\delta$
is a zero free region for the Riemann zeta function
and this is not the case.

\section{\label{section:PringsheimEx}A refined non vanishing principle}
In order to extend theorem \ref{stm::cmHoloExtension}
we need a refined version of theorem \ref{stm::Pringsheim}.

\begin{theorem}\label{stm::RealAnalyticAMPringsheim}
Let $R,\delta>0$ be positive real numbers.
Let
\begin{equation*}
\amFunc:]-\delta,R[\to\RR
\end{equation*}
be a real analytic function.

If
\begin{equation*}
a_n\definedby\dfrac{\amFunc^{(n)}(0)}{n!}\geq0,\ n=0,1,\ldots,
\end{equation*}
then the radius of convergence of the series
\begin{equation*}
\sum_{n=0}^{\infty}a_nz^n
\end{equation*}
is greater than $R$.
\end{theorem}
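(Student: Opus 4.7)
Denote by $r_0\in(0,+\infty]$ the radius of convergence of $\sum_{n=0}^{\infty}a_nz^n$ and by $f(z)$ the holomorphic function this series defines on the disk $|z|<r_0$. Real analyticity of $\amFunc$ at the origin already forces $r_0>0$. The plan is to argue by contradiction: assume $r_0<R$, and exhibit a holomorphic extension of $f$ past the positive real boundary point $z=r_0$, in violation of Theorem~\ref{stm::Pringsheim} applied to the nonnegative sequence $(a_n)$.

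The first step is to upgrade the equality $f=\amFunc$ from a small neighbourhood of $0$ to the whole real overlap $(-r_0,r_0)\cap(-\delta,R)$. This is not immediate from the definition of radius of convergence, but follows from the identity principle for real analytic functions on the connected open interval $(-\delta,R)\cap(-r_0,r_0)$: the subset on which $\amFunc$ and the real analytic restriction of $f$ coincide is open, closed, and nonempty, hence equals the whole interval.

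Next, assuming $r_0<R$, the point $r_0$ lies in the interior of $(-\delta,R)$, so real analyticity of $\amFunc$ there furnishes a holomorphic extension $v$ of $\amFunc$ on some complex disk $|z-r_0|<\rho$. On the real subinterval $(r_0-\rho,r_0)$ both $f$ and $v$ agree with $\amFunc$, so the identity principle for holomorphic functions forces $f=v$ on the connected component of $\{|z|<r_0\}\cap\{|z-r_0|<\rho\}$ that meets this subinterval. Gluing $f$ with $v$ therefore produces a single holomorphic function which extends $f$ to an open set containing $z=r_0$. Since $r_0<+\infty$ and the coefficients $a_n$ are nonnegative, Theorem~\ref{stm::Pringsheim} forbids such an extension at the positive real boundary point of the disk of convergence; this contradiction forces $r_0\geq R$.

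The main obstacle is organisational rather than analytical: one must be careful to identify $f$ with $\amFunc$ on the entire real overlap before invoking the extension $v$, otherwise it is not clear that $f$ and $v$ share values near $r_0$. Once that identification is in place, the result reduces to a single application of the Pringsheim-Vivanti theorem.
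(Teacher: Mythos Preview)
Your proof is correct and follows exactly the same route as the paper's: assume $r_0<R$, use real analyticity of $\amFunc$ at $r_0$ to extend $f$ holomorphically past $z=r_0$, and derive a contradiction with Pringsheim's theorem. The only difference is that you explicitly justify, via the identity principle for real analytic functions, the equality $f(x)=\amFunc(x)$ on the whole real overlap, a step the paper simply asserts.
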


\begin{proof}
Let denote by $r>0$ the radius of convergence of the series
\begin{equation*}
f(z)=\sum_{n=0}^{\infty}a_nz^n.
\end{equation*}
Then $f(z)$ is holomorphic on the disc $\abs{z}<r$
and $f(x)=u(x)$ when $0\leq x< r$.

Assume that $r<R$.
As $\amFunc(x)$ is real analytic 
it follows that
the power series expansion of $\amFunc(x)$ at $x=r$
defines a holomorphic extension of the function $f(z)$
in a neighbourhood of $z=r$.
But this is not allowed by theorem \ref{stm::Pringsheim}.

It necessarily follows that $r\geq R$
and we are done.

\end{proof}

Let us recall that
a $C^{\infty}$ function
\begin{equation*}
\cmFunc:\cmDomain\to\RR
\end{equation*}
where $\cmDomain$ is a interval of $\RR$ is
\emph{absolutely monotone} if for $k=0,1,\ldots$
\begin{equation*}
\cmFunc^{(k)}(x)\geq0
\end{equation*}
for each $x\in\cmDomain$.

The absolutely monotone functions
were introduced by S. Bernstein in \cite{article:Bernstein1914AM}.
For a comprehensive treatment of the theory of such a functions
(and their cousins the completely monotone ones)
see e.g.,
\cite[Chapter IV]{book:WidderLaplace};
see also
\cite[Chapter 1]{book:SchillingEtcBernsteinFunctions}.

The following is an extension 
theorem \ref{stm::cmHoloExtension}.

\begin{theorem}\label{stm::cmHoloExtensionEx}
Let $\AFirst{\sigma},\ASecond{\sigma}\in\RR$ with
$\AFirst{\sigma}<\ASecond{\sigma}$
and let
$
f(s)\ 
$
be a holomorphic function defined on the open half space
\begin{equation*}
\Re s >\ASecond{\sigma}.
\end{equation*}

Assume that the restriction of $f(s)$ to the half line
$
]\ASecond{\sigma},+\infty[
$
is a real completely monotone function
and
\begin{equation*}
F(s)=\exp f(s)
\end{equation*}
extends holomorphically in an open neighbourhood of the real interval
$
]\AFirst{\sigma},\ASecond{\sigma}].
$

Then both the functions $f(s)$ and $F(s)$ extend holomorphically
on the open half space
\begin{equation*}
\Re s >\AFirst{\sigma},
\end{equation*}
the relation
\begin{equation*}
F(s)=\exp f(s)
\end{equation*}
still holds on such a half space
and hence
\begin{equation*}
F(s)\neq0,\ \Re s>\AFirst{\sigma}.
\end{equation*}
\end{theorem}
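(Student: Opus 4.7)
The strategy is to mirror the proof of Theorem \ref{stm::cmHoloExtension}, replacing the underlying Theorem \ref{stm::Pringsheim} by its refined variant Theorem \ref{stm::RealAnalyticAMPringsheim}. The refinement promotes real analyticity on an interval (rather than holomorphy on a disc) into a lower bound on the radius of convergence of a power series, which is precisely the information available when $F$ is assumed to extend only in a neighbourhood of $]\AFirst{\sigma},\ASecond{\sigma}]$.

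Fix $\ALarge>\ASecond{\sigma}$ and set $f_\ALarge(z)=f(\ALarge-z)$, $F_\ALarge(z)=F(\ALarge-z)$. The function $f_\ALarge$ is holomorphic on $\Re z<\ALarge-\ASecond{\sigma}$, and its Taylor coefficients at $z=0$ are $(-1)^n f^{(n)}(\ALarge)/n!\geq 0$, by complete monotonicity of $f$ on $]\ASecond{\sigma},+\infty[$. The given neighbourhood of $]\AFirst{\sigma},\ASecond{\sigma}]$ together with $\{\Re s>\ASecond{\sigma}\}$ is an open set containing the whole real half-line $]\AFirst{\sigma},+\infty[$ on which $F$ is holomorphic, and therefore $F_\ALarge$ is real analytic on some interval $]-\delta,\ALarge-\AFirst{\sigma}[$ with $\delta>0$. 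On the disc $|z|<\ALarge-\ASecond{\sigma}$ the relation $F_\ALarge=\exp f_\ALarge$ holds, so
\begin{equation*}
F_\ALarge(z)=e^{f(\ALarge)}\sum_{k\geq 0}\dfrac{1}{k!}\bigl(f_\ALarge(z)-f(\ALarge)\bigr)^k,
\end{equation*}
and since $f_\ALarge(z)-f(\ALarge)$ is a power series in $z$ with nonnegative coefficients and no constant term, the Taylor coefficients of $F_\ALarge$ at $0$ are nonnegative as well.

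Theorem \ref{stm::RealAnalyticAMPringsheim} applied to $F_\ALarge$ on $]-\delta,\ALarge-\AFirst{\sigma}[$ then yields that the Taylor series of $F_\ALarge$ at $0$ has radius of convergence at least $\ALarge-\AFirst{\sigma}$. Theorem \ref{stm::ExpRadiusIsEqualEx} with $\cmE=\exp$ applied to the pair $(f_\ALarge,F_\ALarge)$ forces the Taylor series of $f_\ALarge$ to share that radius of convergence, providing a holomorphic extension of $f$ to the disc $|s-\ALarge|<\ALarge-\AFirst{\sigma}$. Exactly as in the proof of Theorem \ref{stm::cmHoloExtension}, the union of these discs as $\ALarge$ varies in $]\ASecond{\sigma},+\infty[$ is the half-plane $\Re s>\AFirst{\sigma}$, yielding a global holomorphic extension of $f$; the identity $F=\exp f$ on $\Re s>\ASecond{\sigma}$ then propagates by uniqueness of analytic continuation to the connected set $\Re s>\AFirst{\sigma}$, so $F\neq 0$ there. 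The single new ingredient, compared with Theorem \ref{stm::cmHoloExtension}, is the nonnegativity of the Taylor coefficients of $F_\ALarge$ at $0$: this is what makes Theorem \ref{stm::RealAnalyticAMPringsheim} applicable and thereby turns real-analytic extension along an interval into holomorphic extension on a disc.
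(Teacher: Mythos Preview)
Your proof is correct and follows essentially the same route as the paper: translate to $f_\ALarge,F_\ALarge$, use the nonnegativity of the Taylor coefficients of $F_\ALarge$ together with its real analyticity on $]-\delta,\ALarge-\AFirst{\sigma}[$ to invoke Theorem \ref{stm::RealAnalyticAMPringsheim}, then Proposition \ref{stm::ExpRadiusIsEqualEx}, and finally the union-of-discs argument. The only cosmetic difference is that the paper obtains the nonnegativity of the coefficients of $F_\ALarge$ by citing the fact that a composition of absolutely monotone functions is absolutely monotone, whereas you spell this out by expanding the exponential.
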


\begin{proof}
As in the proof of theorem \ref{stm::cmHoloExtensionEx}
we choose $\ALarge>\ASecond{\sigma}$
and consider the functions
\begin{equation*}
f_\ALarge(z)=f(a-z)
\end{equation*}
and
\begin{equation*}
F_\ALarge(z)=F(a-z).
\end{equation*}
They are both holomorphic functions on the disk
$\abs{z}<\ALarge-\ASecond{\sigma}$ and the function $F_\ALarge(z)$
is also holomorphic in a neighbourhood of the segment $[0, \ALarge-\AFirst{\sigma}[$.

Since $f$ is completely monotone on the interval $]\ASecond{\sigma}, 2\ALarge-\ASecond{\sigma}[$
then $f_\ALarge$ is absolutely monotone on the interval $]\ASecond{\sigma}-\ALarge,\ALarge-\ASecond{\sigma}[$.

Since the composition of absolutely monotone function is absolutely monotone it follows that
the function $F_\ALarge(z)=\exp\bigl(f_\ALarge(z)\bigr)$ is also absolutely monotone on
the interval $]\ASecond{\sigma}-\ALarge,\ALarge-\ASecond{\sigma}[$.

But the function $F_\ALarge(z)$ is real analytic on 
the interval $]\ASecond{\sigma}-\ALarge,\ALarge-\AFirst{\sigma}[$.

Then theorem \ref{stm::RealAnalyticAMPringsheim} implies that
the radius of convergence of the powers expansion of $F_a(t)$ at $t=0$
is greater that $\ALarge-\AFirst{\sigma}$.

Proposition \ref{stm::ExpRadiusIsEqualEx} implies that
both the powers expansions of $f_\ALarge(z)$ and $F_\ALarge(z)$ at $z=0$
have the same radius of convergence at least $\ALarge-\AFirst{\sigma}$
and hence provide holomorphic extensions respectively of $f(s)$ and $F(s)$
on the disc $\abs{s-\ALarge}<\ALarge-\AFirst{\sigma}$.

As in the proof of theorem \ref{stm::cmHoloExtensionEx}
we conclude the proof by observing that
the union of all the disks
\begin{equation*}
\abs{s-\ALarge}<\ALarge-\AFirst{\sigma},\ \ALarge>\ASecond{\sigma}
\end{equation*}
is the open half space
$
\Re s>\AFirst{\sigma}.
$

\end{proof}
\section{\label{section:MainEx}The main theorem}
In this section we consider a fixed  real arithmetic function
\begin{equation*}
	\beur:\NN^+\to\RR
\end{equation*}
satisfying the following properties:
\begin{enumerate}
\item\label{beur::Multiplicative}
$\beur$ is completely multiplicative, that is
\begin{equation*}
	\beur(mn)=\beur(n)\beur(m)
\end{equation*}
for each $m,n\in\NN^+$;
\item\label{beur::Positive}
$
	\beur(n)>1\ 
$
for each $n>1$;
\item\label{beur::Zeta}
For a certain $\AFirst{\sigma}>0$
the (generalized) Dirichlet series
\begin{equation*}
	Z(s)=\sum_{n=1}^{\infty}\dfrac{1}{\beur(n)^s}
\end{equation*}
converges for $\Re s>\AFirst{\sigma}$
and 
\begin{equation*}
	\lim_{\sigma\to\AFirst{\sigma}^+}
	Z(\sigma)=+\infty
\end{equation*}
\end{enumerate}

Observe that condition \eqref{beur::Zeta} forces
\begin{equation}\label{beu::Infty}
	\lim_{n\to+\infty}\beur(n)=+\infty
\end{equation}
and hence:
\begin{proposition}\label{beur::StriclyPositive}
There exists $\beur_0>1$ such that
\begin{equation*}
	\beur(n)\geq\beur_0
\end{equation*}
for each $n>1$.
\end{proposition}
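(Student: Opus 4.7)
The plan is to exploit the fact that $\beur(n) \to +\infty$, which the excerpt has just recorded as \eqref{beu::Infty}, in order to reduce the problem from an infinite infimum to a minimum over a finite set.

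First, I would spell out why \eqref{beu::Infty} holds, even though it is stated as an ``observation'': since the series $Z(s) = \sum 1/\beur(n)^s$ converges for some real $s > \AFirst{\sigma} > 0$, its general term must tend to zero, i.e.\ $\beur(n)^{-s} \to 0$, hence $\beur(n) \to +\infty$. Equivalently, for every $M > 0$ the set
\begin{equation*}
A_M \definedby \{\,n \in \NN^+ : n > 1,\ \beur(n) \leq M\,\}
\end{equation*}
is finite.

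Next, fix any $M > 1$, for concreteness $M = 2$, and consider the finite set $A_2 = \{n_1,\dots,n_k\}$. If $A_2$ is empty then every $n > 1$ already satisfies $\beur(n) > 2$ and we may take $\beur_0 = 2$. Otherwise, set
\begin{equation*}
\beur_0 \definedby \min\bigl(\beur(n_1),\dots,\beur(n_k),\,2\bigr).
\end{equation*}
By hypothesis \eqref{beur::Positive} each $\beur(n_i) > 1$, and the minimum of finitely many numbers each strictly greater than $1$ is itself strictly greater than $1$, so $\beur_0 > 1$. For $n > 1$, either $n \in A_2$, in which case $\beur(n) \geq \beur_0$ by construction, or $n \notin A_2$, in which case $\beur(n) > 2 \geq \beur_0$. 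In either case $\beur(n) \geq \beur_0$, which is the required conclusion.

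There is essentially no obstacle here: complete multiplicativity is not even used, only the growth condition \eqref{beu::Infty} plus the pointwise strict inequality $\beur(n) > 1$. The one thing to be slightly careful about is to include the cutoff value $2$ inside the minimum (rather than just taking the minimum over $A_2$), so that the argument covers the degenerate case where $A_2$ happens to be empty.
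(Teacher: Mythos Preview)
Your argument is correct and is exactly the standard way to fill in what the paper leaves implicit: the paper provides no explicit proof at all, simply writing ``and hence'' after recording \eqref{beu::Infty} and then stating the proposition. Your reduction to a finite minimum via the level set $A_2$ is the natural elaboration of that one-word justification, and your care about the empty case is appropriate (if slightly overcautious, since including $2$ in the minimum already handles it uniformly).
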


Observe that we do not require the sequence $n\mapsto\beur(n)$
to be not decreasing.

The basic example is given by the Dedekind zeta function of number field
(see, e.g.,
\cite{book:LangAlgebraicNumerTheory},
\cite{book:NeukirchANT}
).
Let $\NumField$ be a number field,
that is a finite extension of the rational field $\Q$
and let $\NumInt$
be the ring of the algebraic integers in $\NumField$.
Consider a one to one bijection
\begin{equation}\label{eq::PrimeBijection}
	p\mapsto\PrimeIdeal_p
\end{equation}
between the positive prime numbers $P=\{2,3,5,\ldots\}$
and the non zero prime (i.e. maximal) ideals of $\NumInt$.

Since the non zero ideals of $\NumInt$ factor uniquely
as product of non zero prime ideals then
we can extend \eqref{eq::PrimeBijection} to a bijection
\begin{equation*}
n\mapsto\NumIdeal_n
\end{equation*}
between the positive integers $n\in\NN^+$
in such a way that if
\begin{equation*}
	n=p_1^{\PrimeExponent_1}\cdots p_k^{\PrimeExponent_k}
\end{equation*}
then
\begin{equation*}
	\NumIdeal_n=
		\PrimeIdeal_{p_1}^{\PrimeExponent_1}
		\cdots\PrimeIdeal_{p_k}^{\PrimeExponent_k}.
\end{equation*}
Then the arithmetic function
\begin{equation*}
	\beur(n)=\mathfrak{N}(\NumIdeal_n)
\end{equation*}
where $\mathfrak{N}(\NumIdeal_n)$ 
is the \emph{norm} of the ideal $\PrimeIdeal_n$
satisfies the properties $(1)$, $(2)$ and $(3)$ above.

Then
\begin{equation*}
	Z(s)=\sum_{n=1}^{\infty}\dfrac{1}{\beur(n)^s}=\zeta_\NumField(s),
\end{equation*}
where
\begin{equation*}
\zeta_\NumField(s)=\sum_{\NumIdeal}\dfrac{1}{\mathfrak{N}(I)^s}
\end{equation*}
is the Dedekind zeta function of the number field $\NumField$.

Other examples arise from abstract arithmetic semigroups
(see \cite{book:KnopfmacherAbstractANT}),
Beurling's generalized prime numbers
(see \cite[Section 8.4, pag. 266]{book:MontgomeryVaughan2006},
\cite{article:BeurlingPrimes}),
and
the zeta function associated to
an arithmetical scheme (see \cite{article:SerreZandL1965}).

The purpose of this section is to extend
theorem \ref{stm::ZetaRiemannZeroFreeRegion}
to (generalized) Dirichlet series of the form
\begin{equation*}
	\sum_{n=1}^{+\infty}\dfrac{a(n)}{\beur(n)^s}.
\end{equation*}
Clearly if the arithmetic function $a(n)$ is
bounded the such a series defines a holomorphic function
on the open half space $\Re s>\AFirst{\sigma}$.

We need to prove some elementary properties of such a series.

Let us begin by extending
Lemma \ref{stm::beur::FirstCoeffBase}
and
Lemma \ref{stm::DiriInftyLim}
to such a series.

\begin{lemma}\label{stm::beur::FirstCoeff}
Let
\begin{equation*}
	F(s)=\sum_{n=1}^{\infty}\dfrac{a(n)}{\beur(n)^s}.
\end{equation*}
where $a:\NN^+\to\CC$
is a bounded arithmetic function.
Then
\begin{equation*}
	\lim_{\sigma\to\infty}F(\sigma)=a(1).
\end{equation*}
\end{lemma}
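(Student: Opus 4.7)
The plan is to mimic the standard proof of Lemma \ref{stm::beur::FirstCoeffBase}, isolating the $n=1$ term and showing the remaining tail vanishes as $\sigma\to+\infty$. The only new ingredient is that the denominators are now $\beur(n)^s$ rather than $n^s$, and the key is that the analogue of the gap between $1$ and $2$ is supplied by Proposition \ref{beur::StriclyPositive}.

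First I would observe that since $\beur$ is completely multiplicative, $\beur(1)=\beur(1)^2$, and since $\beur(1)>0$ (being a real positive value by property (2) extended to $n=1$, or more simply, a nonzero real with $\beur(1)^2=\beur(1)$), we have $\beur(1)=1$. Hence the $n=1$ term of $F(\sigma)$ is exactly $a(1)$, and it suffices to prove
\begin{equation*}
\lim_{\sigma\to+\infty}\sum_{n=2}^{\infty}\frac{a(n)}{\beur(n)^{\sigma}}=0.
\end{equation*}

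Next, fix any $\sigma_1>\AFirst{\sigma}$ so that $Z(\sigma_1)=\sum_{n\geq1}\beur(n)^{-\sigma_1}$ converges. Let $M$ be a bound for $|a(n)|$, and let $\beur_0>1$ be the constant from Proposition \ref{beur::StriclyPositive}. For $\sigma>\sigma_1$ and $n\geq2$ one has
\begin{equation*}
\frac{1}{\beur(n)^{\sigma}}=\frac{1}{\beur(n)^{\sigma-\sigma_1}}\cdot\frac{1}{\beur(n)^{\sigma_1}}\leq\frac{1}{\beur_0^{\sigma-\sigma_1}}\cdot\frac{1}{\beur(n)^{\sigma_1}},
\end{equation*}
so that
\begin{equation*}
\left|\sum_{n=2}^{\infty}\frac{a(n)}{\beur(n)^{\sigma}}\right|\leq \frac{M}{\beur_0^{\sigma-\sigma_1}}\bigl(Z(\sigma_1)-1\bigr).
\end{equation*}
Since $\beur_0>1$, the right hand side tends to $0$ as $\sigma\to+\infty$, which proves the lemma.

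There is no real obstacle here; the only subtlety is making sure the factorization $\beur(n)^{-\sigma}=\beur(n)^{-(\sigma-\sigma_1)}\beur(n)^{-\sigma_1}$ combined with the uniform lower bound $\beur(n)\geq\beur_0>1$ gives a geometric decay factor $\beur_0^{-(\sigma-\sigma_1)}$ that can be pulled out of the sum. This is exactly where property (2) of $\beur$ (strengthened to Proposition \ref{beur::StriclyPositive}) replaces the trivial estimate $n\geq2$ used in the classical case.
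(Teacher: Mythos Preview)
Your proof is correct and follows essentially the same approach as the paper: split off the $n=1$ term, then use the uniform lower bound $\beur(n)\geq\beur_0>1$ for $n\geq2$ to factor out $\beur_0^{-(\sigma-\sigma_1)}$ from the tail and let $\sigma\to+\infty$. The only cosmetic difference is that the paper bounds the leftover constant by $\sum_{n\geq2}|a(n)|\beur(n)^{-\sigma_1}$ directly rather than by $M\bigl(Z(\sigma_1)-1\bigr)$, and it does not pause to argue $\beur(1)=1$ since this is part of the definition of complete multiplicativity.
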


\begin{proof}
Let $\beur_0>1$ such that $\beur(n)\geq\beur_0$ when $n>1$.

Fix $\ASecond{\sigma}>\AFirst{\sigma}$.
Then for each $\sigma>\ASecond{\sigma}$ we have
\begin{equation*}
	\abs{F(\sigma)-a(1)}
	\leq\sum_{n=2}^{\infty}\dfrac{\abs{a(n)}}{\beur(n)^\sigma}
	\leq\dfrac{1}{\beur_0^{\sigma-\ASecond{\sigma}}}
		\sum_{n=2}^{\infty}\dfrac{\abs{a(n)}}{\beur(n)^{\ASecond{\sigma}}}.
\end{equation*}
Since
\begin{equation*}
	\lim_{\sigma\to+\infty}\dfrac{1}{\beur_0^{\sigma-\ASecond{\sigma}}}=0
\end{equation*}
the assertion follows.

\end{proof}

We define the $\beur-$von Mangold function
\begin{equation*}
\Lambda_\beur (n)={\begin{cases}\log\beur( p)&{\text{if }}n=p^{k}{\text{ for some prime }}p{\text{ and integer }}k\geq 1,\\0&{\text{otherwise.}}\end{cases}}
\end{equation*}

As in the classical case it is easy to prove that for each $n>0$
\begin{equation}\label{beurMangolt}
	\sum_{d|n}\Lambda_\beur(d)=\log\beur(n)
\end{equation}

\begin{proposition}\label{stm::BeurExp}
Let
\begin{equation*}
F(s)=\sum_{n=1}^{\infty}\dfrac{a(n)}{\beur(n)^s}.
\end{equation*}
where $a:\NN^+\to\CC$
is a bounded completely multiplicative arithmetic function.
Then  when $\Re s >\AFirst{\sigma}$
\begin{equation*}
F(s) =\exp\left(
	\sum_{n=2}^{+\infty}\dfrac{a(n)\Lambda_\beur(n)}{\beur(n)^s\log\beur(n)}
\right)
\end{equation*}
\end{proposition}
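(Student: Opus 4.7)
The plan is to adapt the proof of Lemma \ref{stm::DiriInftyLim} to the generalized setting. Set
$$
f(s) = \sum_{n=2}^{\infty} \frac{a(n)\Lambda_\beur(n)}{\beur(n)^s\,\log\beur(n)}.
$$
First I would verify that this series converges absolutely and locally uniformly on $\Re s>\AFirst{\sigma}$. Since $|a(n)|\le 1$ and $\Lambda_\beur(p^k)/\log\beur(p^k)=1/k\le 1$, each term is dominated by $\beur(n)^{-\Re s}$, which is summable by property \eqref{beur::Zeta}. Using \eqref{beu::Infty} to absorb any factor of $\log\beur(n)$ into an arbitrarily small positive power of $\beur(n)$, the same estimate justifies termwise differentiation, yielding
$$
f'(s) = -\sum_{n=2}^{\infty} \frac{a(n)\Lambda_\beur(n)}{\beur(n)^s}.
$$

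Next I would evaluate the product $f'(s)F(s)$ as a Cauchy product. Both factors are absolutely convergent, and complete multiplicativity of $\beur$ gives $\beur(n)\beur(m)=\beur(nm)$, so reindexing pairs $(n,m)$ with $n\ge 2$ by $k=nm$ produces
$$
f'(s)F(s) = -\sum_{k=2}^{\infty} \frac{1}{\beur(k)^s}\sum_{\substack{d\mid k\\ d\ge 2}} a(d)\,a(k/d)\,\Lambda_\beur(d).
$$
Complete multiplicativity of $a$ replaces $a(d)a(k/d)$ by $a(k)$, and the identity \eqref{beurMangolt}, together with $\Lambda_\beur(1)=0$, collapses the inner sum to $a(k)\log\beur(k)$. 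Since $\log\beur(1)=0$, this is exactly $F'(s)$, so $f'(s)F(s)=F'(s)$ on $\Re s>\AFirst{\sigma}$.

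Consequently the derivative of $s\mapsto\exp(-f(s))\,F(s)$ vanishes on the connected open half space $\Re s>\AFirst{\sigma}$, so $F(s)=c\exp f(s)$ for some constant $c$. Letting $s=\sigma\to+\infty$ along the real axis, Lemma \ref{stm::beur::FirstCoeff} gives $F(\sigma)\to a(1)$ and, applied to $f$, $f(\sigma)\to 0$; since $a(1)\in\{0,1\}$ by complete multiplicativity and the trivial case $a\equiv 0$ is excluded, we have $a(1)=1$ and hence $c=1$, which is the desired identity. The only real technical point is justifying the Cauchy-product rearrangement and termwise differentiation for generalized Dirichlet series indexed by $\beur(n)$; both are routine consequences of absolute convergence together with \eqref{beu::Infty}, so I do not expect a genuine obstacle. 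The algebraic core of the argument is the interplay between complete multiplicativity of both $a$ and $\beur$ and the $\beur$-von Mangoldt identity \eqref{beurMangolt}.
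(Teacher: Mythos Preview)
Your proof is correct and follows essentially the same route as the paper's own argument: define $f(s)$, differentiate termwise, use complete multiplicativity together with the identity \eqref{beurMangolt} to get $f'(s)F(s)=F'(s)$, conclude that $\exp(-f(s))F(s)$ is constant, and evaluate the constant via Lemma~\ref{stm::beur::FirstCoeff}. The only difference is that you spell out the convergence and Cauchy-product details and worry about the case $a(1)=0$, which is unnecessary here since the paper's definition of ``completely multiplicative'' already requires $a(1)=1$.
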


\begin{proof}
Let
\begin{equation*}
f(s) =\sum_{n=2}^{+\infty}\dfrac{a(n)\Lambda_\beur(n)}{\beur(n)^s\log\beur(n)}.
\end{equation*}
Then
\begin{equation*}
f'(s) =-\sum_{n=2}^{+\infty}\dfrac{a(n)\Lambda_\beur(n)}{\beur(n)^s}.
\end{equation*}
Since the arithmetic functions $a(n)$ and $\lambda(n)$ are completely multiplicative
then \eqref{beurMangolt} easily implies that
\begin{equation*}
	f'(s)F(s)=F'(s).
\end{equation*}
Then the derivative of the function
\begin{equation*}
	\exp\bigl(-f(s)\bigr)F(s)
\end{equation*}
vanishes, that is
\begin{equation*}
	F(s)=c\exp\bigl(f(s)\bigr)
\end{equation*}
for some constant $c$.
Then put $s=\sigma\in\RR$ and take the limit as $\sigma\to+\infty$;
lemma \ref{stm::beur::FirstCoeff} then implies
\begin{equation*}
	1=a(1)=c\exp(0)=c.
\end{equation*}

\end{proof}

Let us denote by $\Primes$ the set of positive prime numbers.
\begin{proposition}\label{stm::beuPrimesDiverges}
\begin{equation*}
	\lim_{\sigma\to\AFirst{\sigma}^+}\sum_{p\in\Primes}\dfrac{1}{\beur(p)^\sigma}=+\infty
\end{equation*}
\end{proposition}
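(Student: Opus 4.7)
The plan is to deduce the divergence of $\sum_p \beur(p)^{-\sigma}$ from the divergence of $Z(\sigma)$ at $\AFirst{\sigma}^+$ together with the logarithmic representation supplied by Proposition \ref{stm::BeurExp}. The arithmetic function $a \equiv 1$ is trivially completely multiplicative and bounded, so that proposition applies to $Z(s)$ directly, giving
\begin{equation*}
Z(s) = \exp g(s), \qquad g(s) \definedby \sum_{n=2}^{\infty}\dfrac{\Lambda_\beur(n)}{\beur(n)^s\log\beur(n)}
\end{equation*}
on $\Re s > \AFirst{\sigma}$. For real $\sigma > \AFirst{\sigma}$ the function $g(\sigma)$ is real, and since hypothesis \eqref{beur::Zeta} gives $Z(\sigma) \to +\infty$ as $\sigma \to \AFirst{\sigma}^+$, we get $g(\sigma) = \log Z(\sigma) \to +\infty$.

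The next step is to expand $g(\sigma)$ over prime powers. Using the definition of $\Lambda_\beur$ and the fact that $\log\beur(p^k) = k\log\beur(p)$ (complete multiplicativity of $\beur$), I would rewrite
\begin{equation*}
g(\sigma) = \sum_{p\in\Primes}\sum_{k=1}^{\infty}\dfrac{1}{k\,\beur(p)^{k\sigma}}
= \sum_{p\in\Primes}\dfrac{1}{\beur(p)^{\sigma}} + R(\sigma),
\end{equation*}
where $R(\sigma) = \sum_{p\in\Primes}\sum_{k\geq 2}\frac{1}{k\,\beur(p)^{k\sigma}}$ collects the higher prime powers. The goal is to show that $R(\sigma)$ stays bounded as $\sigma \to \AFirst{\sigma}^+$, so that the divergence of $g(\sigma)$ is forced onto the primes-only term.

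For the bound on $R(\sigma)$, I would fix any $\ASecond{\sigma}$ with $\AFirst{\sigma} < \ASecond{\sigma} \leq \AFirst{\sigma}+1$, use Proposition \ref{beur::StriclyPositive} to get $\beur(p)^{-\sigma} \leq \beur_0^{-\AFirst{\sigma}} < 1$ uniformly in $p$ and in $\sigma \in (\AFirst{\sigma},\ASecond{\sigma}]$, and estimate
\begin{equation*}
R(\sigma) \leq \sum_{p}\sum_{k\geq 2}\dfrac{1}{\beur(p)^{k\sigma}} \leq \dfrac{1}{1-\beur_0^{-\AFirst{\sigma}}}\sum_{p\in\Primes}\dfrac{1}{\beur(p)^{2\sigma}} \leq \dfrac{Z(2\sigma)-1}{1-\beur_0^{-\AFirst{\sigma}}}.
\end{equation*}
Since $2\sigma > 2\AFirst{\sigma} > \AFirst{\sigma}$, the function $Z(2\sigma)$ is finite and continuous at $\sigma=\AFirst{\sigma}$, so $R(\sigma)$ is bounded on $(\AFirst{\sigma},\ASecond{\sigma}]$. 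Combining with $g(\sigma) \to +\infty$ gives the desired conclusion
\begin{equation*}
\lim_{\sigma\to\AFirst{\sigma}^+}\sum_{p\in\Primes}\dfrac{1}{\beur(p)^\sigma} = +\infty.
\end{equation*}

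The only point requiring minor care is the uniform lower bound $1-\beur(p)^{-\sigma} \geq 1-\beur_0^{-\AFirst{\sigma}} > 0$, which uses crucially that $\AFirst{\sigma}>0$ and $\beur_0>1$; no genuine obstacle appears, since the Euler-product-style identity is already packaged in Proposition \ref{stm::BeurExp} and the bound on the tail uses only that the series $Z$ converges at $2\AFirst{\sigma}$.
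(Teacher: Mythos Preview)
Your proof is correct and follows essentially the same approach as the paper: apply Proposition~\ref{stm::BeurExp} with $a\equiv 1$ to write $\log Z(\sigma)$ as a prime-power sum, peel off the $k=1$ terms, and show the remainder $R(\sigma)=h(\sigma)$ stays bounded as $\sigma\to\AFirst{\sigma}^+$. The paper simply asserts that $h(\sigma)$ converges for $\sigma>\AFirst{\sigma}/2$ ``as in the classical case,'' whereas you make the bound explicit via $Z(2\sigma)$; this is the same idea carried out with more detail.
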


\begin{proof}

If $\sigma>\AFirst{\sigma}$ proposition \ref{stm::BeurExp} implies
\begin{eqnarray*}
	Z(\sigma)&=&\exp\left(
	\sum_{n=2}^{+\infty}\dfrac{\Lambda_\beur(n)}{\beur(n)^\sigma\log\beur(n)}
	\right)
	=\exp\left(
	\sum_{p\in\Primes}\sum_{m=1}^\infty\dfrac{1}{m\beur(p)^{m\sigma}}
	\right)\\
	&=&\exp\left(
	\sum_{p\in\Primes}\dfrac{1}{\beur(p)^{\sigma}}
	\right)\exp\bigl(h(\sigma)\bigr)\\
\end{eqnarray*}
where 
\begin{equation*}
	h(\sigma)=
		\sum_{p\in\Primes}\sum_{m=2}^\infty\dfrac{1}{m\beur(p)^{m\sigma}}
\end{equation*}
As in the classical case, using the fact that $\beur(n)\geq\beur_0>1$ when $n>1$
it is easy to show that the series $h(\sigma)$ converges for $\sigma>\AFirst{\sigma}/2$
and hence
\begin{equation*}
	\lim_{\sigma\to\AFirst{\sigma}^+}\sum_{p\in\Primes}\dfrac{1}{\beur(p)^\sigma}=
	\lim_{\sigma\to\AFirst{\sigma}^+}\bigl(\log Z(\sigma)
	-h(\sigma)\bigr)=+\infty.
\end{equation*}

\end{proof}

We now are ready to state and prove the main theorem of the paper.

\begin{theorem}\label{stm::ZetaBeurZeroFreeRegion}
Let $\APrev{\sigma}<\AFirst{\sigma}$ and let $\Domain\sset\CC$ be an open connected subset
containing the real half line $]\APrev{\sigma},+\infty[$
and contained in the open half space
$
\Re s>\APrev{\sigma}.\
$
Assume also that $\Domain$ is symmetric with respect to the real axis,
that is $s\in\Domain\implies\bar{s}\in\Domain$
and contains the open half space
$
\Re s>\AFirst{\sigma}.
$

Let
\begin{equation*}
\invZ:\Domain\to\CC
\end{equation*}
be a meromorphic function on $\Domain$
which is holomorphic on $\Domain\cap\RR\setminus\{\AFirst{\sigma}\}$
and 
\begin{equation*}
\invZ(s)=\sum_{n=1}^{\infty}\dfrac{a(n)}{\beur(n)^s}
\end{equation*}
when $\Re s>1$,
where $a(n)$ is a bounded completely multiplicative arithmetic function.

Assume also that 
\begin{equation*}
	Z(s)=\sum_{n=1}^{\infty}\dfrac{1}{\beur(n)^s}
\end{equation*}
extends to a meromorphic function on $\Domain$
with a unique simple pole at $s=\AFirst{\sigma}$.

Let $s_0\in\Domain\setminus\{\AFirst{\sigma}\}$ be given.
Then:

\begin{enumerate}
\item\label{stm::ZetaBeurMain::PingPong}
if $\AFirst{\sigma}$ is a zero or a pole of $L(s)$
and $Z(s_0)\neq0$
then
$s_0$ is a zero (resp. a pole) of $L(s)$
if, and only if,
$\bar{s_0}$ is a pole (resp. a zero) of $L(s)$;
\item\label{stm::ZetaBeur::CaseElleZeroCase}
if $L(\AFirst{\sigma})=0$
and $L(s)$ is holomorphic at $s=s_0$ and $s=\bar{s_0}$
then $Z(s_0)\neq0$;
in particular, if $L(s)$ is holomorphic on $\Domain$
then $\Domain$ is a zero free region for $Z(s)$;
\item\label{stm::ZetaBeur::ElleHalf}
if $L(\AFirst{\sigma})=0$ then
the boundary point $\APrev{\sigma}$ of $\Domain$ satisfies the constraint
\begin{equation*}
\APrev{\sigma}\geq\dfrac{\AFirst{\sigma}}{2}.
\end{equation*}
\end{enumerate}

\end{theorem}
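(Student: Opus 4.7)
The plan is to apply the refined non-vanishing principle of Theorem~\ref{stm::cmHoloExtensionEx} to the auxiliary meromorphic function
\[
F(s) = Z(s)^2\,L(s)\,\overline{L(\bar s)}
\]
when $\AFirst{\sigma}$ is a zero of $L$, and to $G(s) = Z(s)^2/\bigl(L(s)\overline{L(\bar s)}\bigr)$ when $\AFirst{\sigma}$ is a pole. Proposition~\ref{stm::BeurExp} applied separately to $Z$ and to $L$ shows that on $\Re s > \AFirst{\sigma}$
\[
F(s) = \exp f(s), \qquad f(s) = \sum_{n=2}^{\infty}\frac{2\bigl(1+\Re a(n)\bigr)\Lambda_\beur(n)}{\beur(n)^s\log\beur(n)},
\]
and the coefficients $2(1+\Re a(n))$ are non-negative since $|a(n)|\leq 1$; hence the restriction of $f$ to $]\AFirst{\sigma},+\infty[$ is real and completely monotone. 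The analogous statement with coefficients $2(1-\Re a(n))\geq 0$ holds for $G=\exp g$ in the pole case.

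To invoke Theorem~\ref{stm::cmHoloExtensionEx} I fix $\sigma'\in\,]\APrev{\sigma},\AFirst{\sigma}[$ and $\sigma''>\AFirst{\sigma}$ and verify that $F$ extends holomorphically to an open neighbourhood of the real interval $]\sigma',\sigma'']$. The double pole of $Z^2$ at $\AFirst{\sigma}$ is cancelled by the zero of order $2k$ of $L\,\overline{L(\bar\cdot)}$, where $k\geq 1$ is the order of vanishing of $L$ at $\AFirst{\sigma}$, so $F$ has a removable singularity there; elsewhere on $[\sigma',\sigma'']$ both $Z$ and $L$ are holomorphic, and since non-real poles of $L$ are isolated in $\Domain$, compactness provides a thin complex tube around this interval on which $F$ is holomorphic. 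The theorem then extends $F$ holomorphically and zero-free to the entire half-plane $\{\Re s > \sigma'\}$. Since this extension coincides with the original meromorphic $F$ on the real segment $]\sigma',\sigma''[$, analytic continuation identifies the two on the connected component of $\Domain\cap\{\Re s > \sigma'\}$ containing that segment; any pole or zero of the original $F$ in that component would violate, respectively, the holomorphy or the non-vanishing of the extension. Letting $\sigma'\to\APrev{\sigma}^+$, I conclude that $F$ is holomorphic and nowhere vanishing on all of $\Domain$; as a by-product the zero of $L$ at $\AFirst{\sigma}$ is forced to be simple.

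Assertions \ref{stm::ZetaBeurMain::PingPong} and \ref{stm::ZetaBeur::CaseElleZeroCase} now follow from order arithmetic at $s_0\in\Domain\setminus\{\AFirst{\sigma}\}$. Writing $v(s)=\mathrm{ord}_s L$ and noting that $\mathrm{ord}_s\overline{L(\bar s)}=v(\bar s)$, I have the identity
\[
0 = \mathrm{ord}_{s_0} F = 2\,\mathrm{ord}_{s_0} Z + v(s_0) + v(\bar s_0).
\]
If $Z(s_0)\neq 0$ this collapses to $v(s_0) = -v(\bar s_0)$, which is exactly the zero/pole duality of \ref{stm::ZetaBeurMain::PingPong}; the analogous argument with $G$ covers the case where $\AFirst{\sigma}$ is a pole of $L$. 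If instead $L$ is holomorphic at both $s_0$ and $\bar s_0$, then $v(s_0),v(\bar s_0)\geq 0$, so $\mathrm{ord}_{s_0}Z\leq 0$, and since $Z$ is already holomorphic at $s_0\neq\AFirst{\sigma}$ one obtains $\mathrm{ord}_{s_0}Z=0$, i.e.~$Z(s_0)\neq 0$, which is \ref{stm::ZetaBeur::CaseElleZeroCase}.

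For \ref{stm::ZetaBeur::ElleHalf} I argue by contradiction: suppose $\APrev{\sigma}<\AFirst{\sigma}/2$. Landau's theorem, in the version valid for generalised Dirichlet series in $\beur(n)^{-s}$, applied to $f$ (which is holomorphic on $\{\Re s > \APrev{\sigma}\}$ with non-negative coefficients), gives convergence of the series at $s=\AFirst{\sigma}$ and at $s=\AFirst{\sigma}/2$. Keeping only the $n=p$ terms at $s=\AFirst{\sigma}$ and, using $\Lambda_\beur(p^2)=\log\beur(p)$ and $\beur(p^2)=\beur(p)^2$ from complete multiplicativity, the $n=p^2$ terms at $s=\AFirst{\sigma}/2$, I find that both
\[
\sum_{p\in\Primes}\frac{1+\Re a(p)}{\beur(p)^{\AFirst{\sigma}}} \qquad\text{and}\qquad \sum_{p\in\Primes}\frac{1+\Re a(p)^2}{\beur(p)^{\AFirst{\sigma}}}
\]
are finite. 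Adding them with suitable positive weights and invoking the elementary inequality $\Re w+\Re w^2\geq -9/8$ for $|w|\leq 1$ recalled in the introduction yields $\sum_{p\in\Primes}\beur(p)^{-\AFirst{\sigma}}<\infty$, in flat contradiction with Proposition~\ref{stm::beuPrimesDiverges}. The main technical hurdle is the global holomorphy-and-non-vanishing extension of $F$ performed in the second paragraph: turning local holomorphy near the real axis into a statement on all of $\Domain$ in the presence of possibly non-real poles of $L$. Once that is in hand, \ref{stm::ZetaBeurMain::PingPong}--\ref{stm::ZetaBeur::CaseElleZeroCase} are formal and \ref{stm::ZetaBeur::ElleHalf} reduces to the classical Ingham-style prime-power inequality.
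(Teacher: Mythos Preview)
Your proof is correct and follows the paper's approach closely: the same auxiliary functions $F=Z^2L\,\overline{L(\bar\cdot)}$ and $G=Z^2/\bigl(L\,\overline{L(\bar\cdot)}\bigr)$, the same appeal to Theorem~\ref{stm::cmHoloExtensionEx}, order arithmetic for parts \ref{stm::ZetaBeurMain::PingPong}--\ref{stm::ZetaBeur::CaseElleZeroCase}, and the inequality $\Re w+\Re w^2\geq -9/8$ combined with Proposition~\ref{stm::beuPrimesDiverges} for part \ref{stm::ZetaBeur::ElleHalf}. The only differences are cosmetic: you spell out the compactness and connected-component arguments that the paper leaves implicit, and for \ref{stm::ZetaBeur::ElleHalf} you extract the $n=p$ terms of $f$ at $s=\sigma_1$ and the $n=p^2$ terms at $s=\sigma_1/2$ separately, whereas the paper bounds $f(\sigma)\geq\tfrac{7}{8}\sum_p\nu(p)^{-2\sigma}$ at a single $\sigma$ --- both routes land on the same divergent prime sum.
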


\begin{proof}

Since $a(n)$ is completely multiplicative and bounded then necessarily
\begin{equation*}
\abs{a(n)}\leq1
\end{equation*}
for each $n\in\NN^+$
and by proposition \ref{stm::BeurExp} we also have
\begin{equation*}
\invZ(s)=\exp\left(
\sum_{n=1}^{\infty}\dfrac{a(n)\Lambda_\beur(n)}{\beur(n)^s\log\beur(n)}
\right)
\end{equation*}
and analogously
\begin{equation*}
Z(s)=\exp\left(
\sum_{n=1}^{\infty}\dfrac{\Lambda_\beur(n)}{\beur(n)^s\log\beur(n)}
\right).
\end{equation*}

Consider the functions
\begin{equation*}
F(s)=Z(s)^2\invZ(s)\bar{\invZ(\bar{s})}
\end{equation*}
and
\begin{equation*}
G(s)=\dfrac{Z(s)^2}{\invZ(s)\bar{\invZ(\bar{s})}}
\end{equation*}

When $\Re s>1$ 
\begin{equation*}
F(s)=\exp f(s),\ G(s)=\exp g(s)
\end{equation*}
where
\begin{equation*}
f(s)=\sum_{n=1}^{\infty}\dfrac{2\bigl(1+\Re a(n)\bigr)\Lambda_\beur(n)}{\beur(n)^s\log\beur(n)}.
\end{equation*}
and
\begin{equation*}
g(s)=\sum_{n=1}^{\infty}\dfrac{2\bigl(1-\Re a(n)\bigr)\Lambda_\beur(n)}{\beur(n)^s\log\beur(n)}.
\end{equation*}

Clearly $f(s)$ and $g(s)$ are Dirichlet series (absolutely) convergent when $\Re s>\AFirst{\sigma}$
with non negative coefficients
and hence their restriction to the real half line $]\AFirst{\sigma},+\infty[$
are completely monotone functions.

Assume now that $L(\AFirst{\sigma})=0$.
Theorem \ref{stm::cmHoloExtensionEx} then implies that
$F(s)$ never vanishes on $\Domain$ and hence
\begin{equation*}
	\invZ(s)\bar{\invZ(\bar{s})}=\dfrac{F(s)}{Z(s)^2}.
\end{equation*}
Similarly, if $L(s)$ has a pole at $s=\AFirst{\sigma}$ we obtain
\begin{equation*}
\invZ(s)\bar{\invZ(\bar{s})}=\dfrac{Z(s)^2}{G(s)}
\end{equation*}
with $G(s)$ never vanishing on $\Domain$.

In both cases \eqref{stm::ZetaBeurMain::PingPong} easily follows.

If $L(\AFirst{\sigma})=0$ then
$F(s)=Z(s)^2\invZ(s)\bar{\invZ(\bar{s})}$
never vanishes on $\Domain$ and 
\eqref{stm::ZetaBeur::CaseElleZeroCase} also follows.

It remains to prove \eqref{stm::ZetaBeur::ElleHalf},
that is
the inequality $\APrev{\sigma}\geq\AFirst{\sigma}/2$
assuming $L(\AFirst{\sigma})=0$.

For this purpose observe that theorem \ref{stm::cmHoloExtensionEx} also implies that
the defined above function
\begin{equation*}
f(s)=\sum_{n=1}^{\infty}\dfrac{2\bigl(1+\Re a(n)\bigr)\Lambda_\beur(n)}{\beur(n)^s\log\beur(n)}
\end{equation*}
extends holomorphically on the open half space
$\Re s>\APrev{\sigma}$.

The Landau's theorem \ref{stm::LandauStandard} also holds
for these generalized series
(see, e.g., \cite[Lema 15.1, pag. 463]{book:MontgomeryVaughan2006})
and hence the series $f(s)$ also
converges when $\Re s>\APrev{\sigma}$.

We will obtain the desired inequality $\APrev{\sigma}\geq\AFirst{\sigma}/2$
showing that the series $f(s)$ diverges at $s=\AFirst{\sigma}/2$.

An argument similar to the one given
at the end of the introduction of the paper yields
\begin{equation*}
	f(\sigma)\geq\dfrac{7}{8}\sum_{p\in\Primes}\dfrac{1}{\beur(p)^{2\sigma}}.
\end{equation*}
Proposition \ref{stm::beuPrimesDiverges} then implies
$f(\sigma)\to+\infty$ as $\sigma\to\AFirst{\sigma}/2$ from the left,
as desired.

\end{proof}

\section{\label{section:Corollaries}Corollaries}
In this section we will prove
several immediate corollaries of
theorem \ref{stm::ZetaBeurZeroFreeRegion}
which in a simple and unified manner
gives various non vanishing results
for zeta and $L$ like functions
on the boundary of the half plane
of absolute convergence.

Let
\begin{equation*}
	\beur:\NN^+\to\RR
\end{equation*}
be as in the previous section
with the associated ``zeta function''
\begin{equation*}
	Z(s)=\sum_{n=1}^{\infty}\dfrac{1}{\beur(n)^s}
\end{equation*}
having $\AFirst{\sigma}>0$ as abscissa of absolute convergence.

Let us begin with the ``prime number theorem''.
Such a theorem is already proved in the literature
(see \cite{article:MurtyOnSatoConj} and
\cite[Theorem 1.2, pag. 10]{book:MurtyMyrtyNonVanishingElleF})
but our proof is very easy.

\begin{corollary}\label{stm::ElleZetaPNT}
If $Z(s)$ extends in an open neighbourhood of
the closed half space
\begin{equation*}
	\Re s\geq\AFirst{\sigma}
\end{equation*}
to a meromorphic function with a unique simple pole at $s=\AFirst{\sigma}$
then
\begin{equation*}
	Z(\AFirst{\sigma}+it)\neq0
\end{equation*}
for each $t\in\RR\setminus\{0\}$.
\end{corollary}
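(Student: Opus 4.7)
The plan is to apply theorem \ref{stm::ZetaBeurZeroFreeRegion} to the shifted series $L(s) := Z(s + i t_0)$, which is the Dirichlet series $\sum_n a(n)/\beur(n)^s$ for the bounded completely multiplicative coefficients $a(n) = \beur(n)^{-i t_0}$, and then to extract a contradiction by a short order count at two points. So I would suppose for contradiction that $Z(\AFirst{\sigma} + i t_0) = 0$ for some $t_0 \in \RR \setminus \{0\}$; then $L(\AFirst{\sigma}) = 0$.

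First I would construct a domain $\Domain$ satisfying the hypotheses of theorem \ref{stm::ZetaBeurZeroFreeRegion} simultaneously for $Z$ and $L$. Starting from the open neighborhood $U$ of $\Re s \geq \AFirst{\sigma}$ on which $Z$ is meromorphic, I would intersect $U$ with the translates $U \pm i t_0$, further intersect with the image of the resulting set under $s \mapsto \bar s$ to force symmetry, and finally take the connected component that contains $[\AFirst{\sigma}, +\infty[$. The $\Domain$ so obtained is symmetric with respect to the real axis, contains the half plane $\Re s > \AFirst{\sigma}$, and contains $]\AFirst{\sigma} - \epsilon, +\infty[$ for some $\epsilon > 0$. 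On $\Domain$, $Z$ is meromorphic with its unique simple pole at $\AFirst{\sigma}$, while $L$ is meromorphic with its unique simple pole at $\AFirst{\sigma} - i t_0 \neq \AFirst{\sigma}$, so in particular holomorphic at every real point of $\Domain$.

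With this setup the proof of theorem \ref{stm::ZetaBeurZeroFreeRegion} (case $L(\AFirst{\sigma}) = 0$) asserts that
\[
F(s) = Z(s)^2\, L(s)\, \bar{L(\bar s)} = Z(s)^2\, Z(s + i t_0)\, Z(s - i t_0)
\]
is holomorphic and nowhere vanishing on all of $\Domain$; the second equality uses Schwarz reflection, valid because $Z$ has real Dirichlet coefficients and $\Domain$ is symmetric. Hence $\operatorname{ord}_w F = 0$ at every $w \in \Domain$. Writing $m = \operatorname{ord}_{\AFirst{\sigma} + i t_0} Z = \operatorname{ord}_{\AFirst{\sigma} - i t_0} Z \geq 1$ (again by reflection), the identity at $w = \AFirst{\sigma}$ reads $-2 + 2m = 0$, forcing $m = 1$; the identity at $w = \AFirst{\sigma} - i t_0$ then reads $2m - 1 + \operatorname{ord}_{\AFirst{\sigma} - 2 i t_0} Z = 0$, forcing $\operatorname{ord}_{\AFirst{\sigma} - 2 i t_0} Z = -1$, that is, a simple pole of $Z$ at $\AFirst{\sigma} - 2 i t_0 \neq \AFirst{\sigma}$, contradicting the unique-pole hypothesis. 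The only step that requires care is the construction of $\Domain$; the remainder is the classical $\zeta(s)^2 \zeta(s+it) \zeta(s-it)$ cancellation mechanism recalled in the introduction, translated to the present setting.
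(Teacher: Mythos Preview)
Your argument is correct and is essentially the same as the paper's: both set $L(s)=Z(s+it_0)$, invoke the nonvanishing of $F(s)=Z(s)^2 L(s)\overline{L(\bar s)}=Z(s)^2 Z(s+it_0)Z(s-it_0)$ coming from Theorem~\ref{stm::cmHoloExtensionEx}, and extract a spurious pole of $Z$ on the line $\Re s=\AFirst{\sigma}$. The only difference is packaging: the paper quotes assertion~\eqref{stm::ZetaBeurMain::PingPong} of Theorem~\ref{stm::ZetaBeurZeroFreeRegion} with $s_0=\AFirst{\sigma}-2it_0$ (obtaining a pole of $Z$ at $\AFirst{\sigma}+3it_0$), whereas you unpack that assertion and do the order count directly at $\AFirst{\sigma}$ and $\AFirst{\sigma}-it_0$ (obtaining a pole at $\AFirst{\sigma}-2it_0$, one step earlier). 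One small point: your $\Domain$ need not sit inside any half plane $\Re s>\APrev{\sigma}$ as Theorem~\ref{stm::ZetaBeurZeroFreeRegion} formally requires, but since you only use the nonvanishing of $F$ on the line $\Re s=\AFirst{\sigma}$, intersecting with $\Re s>\AFirst{\sigma}-\epsilon$ before taking the connected component removes the issue without affecting the rest of the argument.
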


\begin{proof}
Let $t\in\RR\setminus\{0\}$
and suppose that $Z(\AFirst{\sigma}+it)=0$.
Since $\bar{Z(\bar{s})}=Z(s)$ then also $Z(\AFirst{\sigma}-it)=0$.

Set $L(s)=Z(s+it)$.
When $\Re s>\AFirst{\sigma}$
\begin{equation*}
	L(s)=\sum_{n=1}^{\infty}\dfrac{\beur(n)^{-it}}{\beur(n)^s},
\end{equation*}
and the function $n\mapsto\beur(n)^{-it}$ is bounded and completely multiplicative.
We also have $L(\AFirst{\sigma})=0$ and $L(\AFirst{\sigma}-2it)=0$.
Then assertion \eqref{stm::ZetaBeurMain::PingPong}
of theorem \ref{stm::ZetaBeurZeroFreeRegion} implies that
$L(s)$ has a pole at $s=\AFirst{\sigma}+2it$,
that is $Z(s)$ has (another) pole at $s=\AFirst{\sigma}+3it$
and this contradicts the hypotheses made on $Z(s)$.

\end{proof}

Let also
\begin{equation*}
	a:\NN^+\to\RR
\end{equation*}
be a bounded completely multiplicative function
with the associated ``$L-$function''
\begin{equation*}
	\invZ(s)=\sum_{n=1}^{\infty}\dfrac{a(n)}{\beur(n)^s}
\end{equation*}

Most non vanishing theorem for $L-$ function associated
to various Dirichlet/Hecke characters follow
from the following statement.

\begin{corollary}\label{stm::ElleZetaZPole}
Let $Z(s)$ and $L(s)$ be given.
Assume that $L(s)$ is meromorphic on an open neighbourhood
of the closed half space
\begin{equation*}
\Re s\geq\dfrac{\AFirst{\sigma}}{2}
\end{equation*}
and also $Z(s)$ is holomorphic there
with the exception of a simple pole at
$s=\AFirst{\sigma}$.

If $L(\AFirst{\sigma}+it)=0$ for some $t\in\RR$
then the fuction $L(s)$ 
admit at least a pole
at $s=\sigma+it$ for some $\sigma$ satisfying
\begin{equation*}
	\dfrac{\AFirst{\sigma}}{2}\leq\sigma<\AFirst{\sigma}.
\end{equation*}

In particular, if $L(s)$ is holomorphic on such a neighbourhood
then
\begin{equation*}
L(\AFirst{\sigma}+it)\neq0
\end{equation*}
for each $t\in\RR$.

\end{corollary}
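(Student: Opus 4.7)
My plan is to reduce the statement to assertion \eqref{stm::ZetaBeur::ElleHalf} of theorem \ref{stm::ZetaBeurZeroFreeRegion} via a translation argument. Fix $t\in\RR$ with $L(\AFirst{\sigma}+it)=0$ and set $\tilde L(s)\definedby L(s+it)$. For $\Re s>\AFirst{\sigma}$ this is the Dirichlet series
\begin{equation*}
\tilde L(s)=\sum_{n=1}^{\infty}\dfrac{a(n)\beur(n)^{-it}}{\beur(n)^{s}},
\end{equation*}
whose coefficients $n\mapsto a(n)\beur(n)^{-it}$ are bounded and completely multiplicative (since $\abs{\beur(n)^{-it}}=1$ and products of completely multiplicative functions are completely multiplicative), and moreover $\tilde L(\AFirst{\sigma})=0$.

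I would then argue by contradiction. Suppose $L$ has no pole at any point $\sigma+it$ with $\sigma\in[\AFirst{\sigma}/2,\AFirst{\sigma})$, so that $\tilde L$ is holomorphic at each real point of $[\AFirst{\sigma}/2,\AFirst{\sigma})$. Since $\tilde L$ is meromorphic on a translate of the given neighbourhood of $\{\Re s\geq\AFirst{\sigma}/2\}$, its pole set there is discrete, hence $\tilde L$ is holomorphic on some open complex neighbourhood of the segment $[\AFirst{\sigma}/2,\AFirst{\sigma})$.

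Next I would pick $\APrev{\sigma}<\AFirst{\sigma}/2$ close to $\AFirst{\sigma}/2$ and build a connected open set $D\subset\{\Re s>\APrev{\sigma}\}$, symmetric under complex conjugation, containing both the half line $]\APrev{\sigma},+\infty[$ and the half plane $\{\Re s>\AFirst{\sigma}\}$, thin enough along the real axis that $\tilde L$ is meromorphic on $D$ and holomorphic at every real point of $D$ except $\AFirst{\sigma}$, while $Z$ remains meromorphic on $D$ with a unique simple pole at $\AFirst{\sigma}$. Concretely, $D$ is obtained by intersecting the translated domain of $\tilde L$ with its complex conjugate and with the given domain of $Z$, thinning to stay inside the holomorphy neighbourhood of $[\AFirst{\sigma}/2,\AFirst{\sigma})$ found above, and finally adjoining the right half plane $\{\Re s>\AFirst{\sigma}\}$. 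Theorem \ref{stm::ZetaBeurZeroFreeRegion} then applies to $\tilde L$ and $Z$ on this $D$ with $\tilde L(\AFirst{\sigma})=0$, so conclusion \eqref{stm::ZetaBeur::ElleHalf} yields $\APrev{\sigma}\geq\AFirst{\sigma}/2$, contradicting the choice of $\APrev{\sigma}$. Therefore $L$ must have a pole at some $\sigma+it$ with $\AFirst{\sigma}/2\leq\sigma<\AFirst{\sigma}$, as claimed; the ``in particular'' clause follows at once, because an $L$ holomorphic throughout the neighbourhood of $\{\Re s\geq\AFirst{\sigma}/2\}$ has no poles to exhibit.

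The main obstacle I anticipate is the bookkeeping in the third step: arranging symmetry under complex conjugation simultaneously with the meromorphy of the translated series $\tilde L$ (whose natural domain loses conjugation symmetry under the translation $s\mapsto s-it$) and with the prescribed pole structure of $Z$. Once $D$ is assembled correctly, the proof is a direct invocation of the main theorem.
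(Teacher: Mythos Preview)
Your proposal is correct and follows the same route as the paper: translate to $L_t(s)=L(s+it)$, observe its coefficients $a(n)\beur(n)^{-it}$ are bounded completely multiplicative with $L_t(\AFirst{\sigma})=0$, and derive a contradiction from assertion \eqref{stm::ZetaBeur::ElleHalf} of Theorem \ref{stm::ZetaBeurZeroFreeRegion}. The paper's proof is two sentences long and simply asserts that if $L_t$ has no real poles on $[\AFirst{\sigma}/2,\AFirst{\sigma})$ then it is holomorphic to the left of $\AFirst{\sigma}/2$, contradicting \eqref{stm::ZetaBeur::ElleHalf}; your version spells out the construction of the symmetric domain $D$ that the paper leaves implicit, and your anticipated ``obstacle'' is exactly the bookkeeping the paper suppresses.
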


\begin{proof}
Let $t\in\RR$
and assume that $L(\AFirst{\sigma}+it)=0$.
Then the function
\begin{equation*}
L_t(s)\definedby L(s+it)
\end{equation*}
satisfies $L_t(\AFirst{\sigma})=0$.

If $L_t(s)$ has no poles at $s=\sigma\geq\AFirst{\sigma}/2$
then $L_t(s)$
is holomorphic up to the left of $s=\AFirst{\sigma}/2$,
contradicting
\eqref{stm::ZetaBeur::ElleHalf}
of theorem \ref{stm::ZetaBeurZeroFreeRegion}.

\end{proof}

Observe that if the Riemann hypothesis holds
the constraint $\Re s\geq\AFirst{\sigma}/2$ in the corollary above is
optimal.
Indeed, when $\AFirst{\sigma}=1$ and
$\beur(n)=n$, that is, $Z(s)=\zeta(s)$, the Riemann zeta function,
then the function
\begin{equation*}
L(s)=\dfrac{\zeta (2s)}{\zeta (s)}=\sum _{n=1}^{\infty }\dfrac{\lambda(n)}{n^s},
\end{equation*}
where $\lambda(n)$ is the Liouville function,
is holomorphic on the open half space $\Re s>1/2$ and $L(1)=0$.

We end
 
with a curiosity.

\begin{lemma}\label{stm::PingPongLemma}
Let $\Ping$ and $\Pong$ two subset of $\RR$.
Assume that $0\in\Ping$ and for each pair of
distincts reals $x,y$ whenever
\begin{equation*}
	\dfrac{x+y}{2}\in\Ping
\end{equation*}
then
\begin{equation*}
	x\in\Ping\Longleftrightarrow y\in\Pong.
\end{equation*}
If $\pong\in\Pong$ then $3\pong\in\Ping\cap\Pong$
and if $\ping\in\Ping$ then $-3\ping\in\Ping\cap\Pong$.
\end{lemma}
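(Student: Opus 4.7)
The plan is to reformulate the hypothesis as a \emph{reflection principle}: for any $m \in A$ and any distinct pair $(x,y)$ with $m = (x+y)/2$, one has $x \in A \Longleftrightarrow y \in B$, and by interchanging the roles of $x$ and $y$ also $x \in B \Longleftrightarrow y \in A$. Geometrically, reflection across any point of $A$ interchanges $A$-membership and $B$-membership, subject to the distinctness constraint.

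For the first assertion I would assume $b \neq 0$ (the case $b=0$ being immediate since then $3b=0 \in A$ and $3b = b \in B$) and carry out a chain of six reflections. Starting from $0 \in A$ and $b \in B$: reflecting $b$ around $0 \in A$ gives $-b \in A$; reflecting $0$ around $-b \in A$ gives $-2b \in B$; reflecting $-2b$ around $0 \in A$ gives $2b \in A$; reflecting $b$ around $-b \in A$ gives $-3b \in A$; reflecting $-3b$ around $0 \in A$ gives $3b \in B$; finally, reflecting $b$ around $2b \in A$ gives $3b \in A$. At every step the hypothesis $b \neq 0$ is precisely what guarantees the two reflected points are distinct.

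For the second assertion I would deduce it directly from the first rather than rerun the whole argument. If $a \in A$ with $a \neq 0$, then reflecting $a$ around $0 \in A$ (the distinctness being ensured by $a \neq 0$) yields $-a \in B$. Now invoking the first assertion with the element $-a$ of $B$ gives $3(-a) = -3a \in A \cap B$.

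The step hardest to \emph{discover} (rather than verify) is the final reflection around $2b$. Producing $3b$ in both $A$ and $B$ requires two different midpoints in $A$: the midpoint $0$ together with $-3b \in A$ suffices for $3b \in B$, but forcing $3b \in A$ is what compels one to first manufacture $2b \in A$ and exploit it as a new midpoint. Once one sees that the argument must chase small multiples of $b$ and that odd multiples are only reachable through midpoints like $-b$ and $2b$, the chain of six reflections falls out naturally, and the symmetry trick $b \leftrightarrow -a$ handles the second assertion for free.
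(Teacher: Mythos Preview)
Your proof is correct and follows essentially the same route as the paper's own argument: the same six applications of the hypothesis in the same order (deriving $-b\in A$, $-2b\in B$, $2b\in A$, $-3b\in A$, $3b\in B$, $3b\in A$), and the same reduction of the second assertion to the first via $-a\in B$. Your ``reflection principle'' framing is a nice way to organize the bookkeeping, but the content is identical.
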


\begin{proof}
Since $0\in\Ping$ then for each $x\neq0$
\begin{equation*}
	x\in\Ping\Longleftrightarrow-x\in\Pong.
\end{equation*}
Let $\pong\in\Pong$ be given.
If $b=0$ then the assertion is trivially verified.
Assume hence that $\pong\neq0$.
 
Then we have $-\pong\in\Ping$
and
\begin{eqnarray*}
\dfrac{(-2\pong)+0}{2}=-\pong\in\Ping,\ 0\in\Ping\ &\implies&
	-2\pong\in\Pong,\ 2\pong\in\Ping,\\
\dfrac{(-3\pong)+\pong}{2}=-\pong\in\Ping,\ \pong\in\Pong\ &\implies&
	-3\pong\in\Ping,\ \boxed{3\pong\in\Pong},\\
\dfrac{\pong+3\pong}{2}=2\pong\in\Ping,\ \pong\in\Pong\ &
	\implies&\boxed{3\pong\in\Ping}.
\end{eqnarray*}
Thus we see that $3\pong\in\Ping\cap\Pong$, as required.

Let now $\ping\in\Ping$. 
If $a=0$ then the assertion is trivially verified
and
if $\ping\neq0$ then $-\ping\in\Pong$ and
we have just seen that then $-3\ping\in\Ping\cap\Pong$,
as desired.

\end{proof}

\begin{proposition}\label{stm::ElleZetaLNoPole}
Let $Z(s)$ and $L(s)$ be meromorphic
in an open neighbourhood of
the closed half space
\begin{equation*}
\Re s\geq\AFirst{\sigma}.
\end{equation*}
Assume that $Z(s)$ has a unique simple pole at $s=\AFirst{\sigma}$.
If $\AFirst{\sigma}$ is a pole or a zero of $L(s)$ then
$L(s)$ has no poles on the line $\Re s=\AFirst{\sigma}$
and $L(\AFirst{\sigma}+it)\neq0$ for each real $t\neq0$.
\end{proposition}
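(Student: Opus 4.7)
The plan is to encode the zeros and poles of $L$ on the critical line $\Re s=\AFirst{\sigma}$ as two disjoint subsets of $\RR$, use vertical translates of $L$ to bring theorem \ref{stm::ZetaBeurZeroFreeRegion}\eqref{stm::ZetaBeurMain::PingPong} to bear at every such point, and then read off the resulting combinatorial structure via lemma \ref{stm::PingPongLemma}. Concretely, I would introduce
\begin{equation*}
	\mathcal{Z}=\{t\in\RR : L(\AFirst{\sigma}+it)=0\},\qquad
	\mathcal{P}=\{t\in\RR : L\text{ has a pole at }\AFirst{\sigma}+it\},
\end{equation*}
disjoint subsets of $\RR$ with $0\in\mathcal{Z}\cup\mathcal{P}$ by hypothesis. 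The proposition is equivalent to the statement $\mathcal{Z}\cup\mathcal{P}\subseteq\{0\}$.

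Corollary \ref{stm::ElleZetaPNT} applied to $Z(s)$ gives $Z(\AFirst{\sigma}+i\tau)\neq 0$ for every $\tau\neq 0$, so the non-vanishing hypothesis of theorem \ref{stm::ZetaBeurZeroFreeRegion}\eqref{stm::ZetaBeurMain::PingPong} is automatic at points $s_0=\AFirst{\sigma}+i\tau$ with $\tau\neq 0$. For any $t_0\in\mathcal{Z}\cup\mathcal{P}$ I would then consider the vertical translate $L_{t_0}(s)=L(s+it_0)$: on $\Re s>\AFirst{\sigma}$ it is the Dirichlet series with bounded completely multiplicative coefficients $a(n)\beur(n)^{-it_0}$, it extends meromorphically to a neighbourhood of $\Re s\geq\AFirst{\sigma}$, and $L_{t_0}(\AFirst{\sigma})$ is either a zero or a pole. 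Applying theorem \ref{stm::ZetaBeurZeroFreeRegion}\eqref{stm::ZetaBeurMain::PingPong} to $L_{t_0}$ at $s_0=\AFirst{\sigma}+i\tau$ yields, for every $\tau\neq 0$, the symmetry
\begin{equation*}
	(t_0+\tau)\in\mathcal{Z}\iff(t_0-\tau)\in\mathcal{P}.
\end{equation*}
Setting $x=t_0+\tau$, $y=t_0-\tau$ so that $\tfrac{x+y}{2}=t_0\in\mathcal{Z}\cup\mathcal{P}$, this is precisely the ping-pong hypothesis of lemma \ref{stm::PingPongLemma}. I would apply the lemma with $\Ping=\mathcal{Z}$, $\Pong=\mathcal{P}$ when $0\in\mathcal{Z}$, and with the roles swapped when $0\in\mathcal{P}$ (the equivalence $x\in\mathcal{Z}\iff y\in\mathcal{P}$ is symmetric under $x\leftrightarrow y$, so both orientations are legitimate). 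Its conclusion pushes every nonzero element of $\Ping$ or $\Pong$ into $\Ping\cap\Pong=\mathcal{Z}\cap\mathcal{P}=\emptyset$, a contradiction; hence $\mathcal{Z}\cup\mathcal{P}\subseteq\{0\}$, which proves the proposition.

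The main technical point is the verification that the translated function $L_{t_0}$ inherits the hypotheses of theorem \ref{stm::ZetaBeurZeroFreeRegion}: vertical translation preserves any open neighbourhood of $\Re s\geq\AFirst{\sigma}$ (one may need to take $\APrev{\sigma}$ slightly below $\AFirst{\sigma}$ in a $t_0$-dependent way to dodge isolated poles of $L$ just to the left of the critical line), while $a(n)\beur(n)^{-it_0}$ is manifestly bounded and completely multiplicative. Once that check is in hand, the rest is pure combinatorial bookkeeping on $\mathcal{Z}$ and $\mathcal{P}$.
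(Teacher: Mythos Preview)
Your proposal is correct and follows essentially the same route as the paper: encode zeros and poles on $\Re s=\AFirst{\sigma}$ as two disjoint sets, verify the ping--pong hypothesis of Lemma~\ref{stm::PingPongLemma} by applying Theorem~\ref{stm::ZetaBeurZeroFreeRegion}\eqref{stm::ZetaBeurMain::PingPong} to the vertical translates $L_{t_0}(s)=L(s+it_0)$, and conclude from $\mathcal{Z}\cap\mathcal{P}=\emptyset$. Your explicit appeal to Corollary~\ref{stm::ElleZetaPNT} to secure $Z(s_0)\neq 0$ on the critical line is a point the paper leaves implicit but which is indeed required by the hypothesis of assertion~\eqref{stm::ZetaBeurMain::PingPong}.
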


\begin{proof}
Assume first that $L(\AFirst{\sigma})=0$.
Let $\Ping, \Pong\sset\RR$  the set of $t\in\RR$ such that
$\AFirst{\sigma}+it$ is respectively a zero or a pole of $L(s)$.

We now show that $\Ping$ and $\Pong$ satisfy the hypotheses of
lemma \ref{stm::PingPongLemma}.

Of course $0\in\Ping$ being $L(\AFirst{\sigma})=0$.
Let $u,v\in\RR$ with $u\neq v$ and assume that
\begin{equation*}
	t\definedby\dfrac{u+v}{2}\in\Ping,
\end{equation*}
that is $L(\AFirst{\sigma}+it)=0$.

Consider then the function
\begin{equation*}
	L_t(s)\definedby L(s+it)
\end{equation*}
and set $s_0=\AFirst{\sigma}+i(u-t)$.

Since $L_t(\sigma)=0$ then
the assertion \eqref{stm::ZetaBeur::ElleHalf} of
theorem \ref{stm::ZetaBeurZeroFreeRegion} implies
that $s_0$ is a zero of $L_t(s)$
if, and only if,
$\bar{s_0}$ is a pole of $L_t(s)$.

Observing that and $\bar{s_0}=\AFirst{\sigma}+i(v-t)$
we obtain that $u\in\Ping$
if, and only if,
$v\in\Pong$,
as desired.

Since obviously $\Ping\cap\Pong=\void$
lemma \ref{stm::PingPongLemma} then forces
$\Pong=\void$ and $\Ping=\{0\}$
and this completes the proof of the proposition
in the case $L(\AFirst{\sigma})=0$.

Assume now that $\AFirst{\sigma}$ is a pole of $L(s)$.
Then it suffices to repeat the aforementioned argument interchanging
$\Ping$ with $\Pong$ and observing that the function $L_t(s)$
has a pole at $s=\AFirst{\sigma}$ instead of a zero.

The proof of the proposition is so completed.

\end{proof}

\bibliographystyle{amsalpha}

\providecommand{\bysame}{\leavevmode\hbox to3em{\hrulefill}\thinspace}
\providecommand{\MR}{\relax\ifhmode\unskip\space\fi MR }
\providecommand{\MRhref}[2]{
  \href{http://www.ams.org/mathscinet-getitem?mr=#1}{#2}
}
\providecommand{\href}[2]{#2}

\end{document}